%% 1. Grinberg's result properly refered
%% 2. Differential operator result refered to Helgason
%% K G_r invarianc
%% Title
%% O instead of SO
%%%\usepackage{backref} Back referencing: Producing page numbers of each
%reference where it is referred. Good to use to delete unreferred references.
%\newenvironment{example}{\begin{exam+}}{\end{exam+}}
%frakture (German) letters
% inner product, norms, absolute
% Lie algebra operations
% math operators
% bold letters
%\usepackage{hyperref, backref}

\documentclass[12pt,cd]{amsart}%
\usepackage{amsfonts, amssymb,amsmath, amsthm, amsxtra, latexsym,  amscd}
\usepackage{times, float}
\usepackage{graphics}
\usepackage{hyperref}
\usepackage[latin1]{inputenc}
\usepackage{color}
\usepackage{amsmath}
\usepackage{amsfonts}
\usepackage{amssymb}
\usepackage{graphicx}%
\setcounter{MaxMatrixCols}{30}
%TCIDATA{OutputFilter=latex2.dll}
%TCIDATA{Version=5.50.0.2960}
%TCIDATA{LastRevised=Monday, November 16, 2015 08:59:22}
%TCIDATA{<META NAME="GraphicsSave" CONTENT="32">}
%TCIDATA{<META NAME="SaveForMode" CONTENT="1">}
%TCIDATA{BibliographyScheme=Manual}
%TCIDATA{Language=American English}
%BeginMSIPreambleData
\providecommand{\U}[1]{\protect\rule{.1in}{.1in}}
%EndMSIPreambleData
\addtolength{\oddsidemargin}{-1.0cm}
\addtolength{\evensidemargin}{-1.0cm}
\addtolength{\textwidth}{2.3cm} 
\addtolength{\textheight}{2.2cm}
\addtolength{\topmargin}{-1.2cm}
\theoremstyle{plain}
\newtheorem{theo+}{Theorem}[section]
\newtheorem{prop+}[theo+]{Proposition}
\newtheorem{coro+}[theo+]{Corollary}
\newtheorem{lemm+} [theo+]{Lemma}
\newtheorem{deep+}  [theo+]  {Deep Result}
\newtheorem{fact+}  [theo+]  {Fact}
\theoremstyle{definition}
\newtheorem{exam+}  [theo+]  {Example}
\newtheorem{rema+}  [theo+]  {Remark}
\newtheorem{defi+}  [theo+]  {Definition}
\newtheorem{xca+}[theo+]{Exercise}
\newenvironment{theorem}{\begin{theo+}}{\end{theo+}}
\newenvironment{proposition}{\begin{prop+}}{\end{prop+}}

\newenvironment{lemma}{\begin{lemm+}}{\end{lemm+}}

\newenvironment{definition}{\begin{defi+}}{\end{defi+}}

\numberwithin{equation}{section}

\def\Norm#1_#2{\Vert#1\Vert_{#2}}

\def\1{{\bf 1}}

\begin{document}
\title[Capelli-type operators]{The Capelli identity and Radon transform for Grassmannians }
\author{Siddhartha Sahi}
\address{Department of Mathematics, Rutgers University, Brunswick, NJ, USA }
\email{\textit{siddhartha.sahi@gmail.com }}
\author{Genkai Zhang}
\address{Mathematical Sciences, Chalmers University of Technology and Mathematical
Sciences, G\"oteborg University, SE-412 96 G\"oteborg, Sweden}
\email{\textit{genkai@chalmers.se}}
\thanks{Research by G. Zhang partially supported by the Swedish Science Council (VR)}

\begin{abstract}
We study a family $C_{s,l}$ of Capelli-type invariant differential operators
on the space of rectangular matrices over a real division algebra. The
$C_{s,l}$ descend to invariant differential operators on the corresponding
Grassmannian, which is a compact symmetric space, and we determine the image
of the $C_{s,l}$ under the Harish-Chandra homomorphism. We also obtain
analogous results for corresponding operators on the non-compact duals of the
Grassmannians, and for line bundles. As an application we obtain a Radon
inversion formula, which generalizes a recent result of B. Rubin for real Grassmannians.

\end{abstract}
\maketitle

\section{Introduction}

Let $\mathbb{F}=\mathbb{R},\mathbb{C},\mathbb{H}$ be a division algebra of
real dimension $d=1,2,4$. The first main result of this paper is a new
Capelli-type identity for Grassmannians over $\mathbb{F}$. This generalizes
earlier work of one of the authors \cite{sahi-2013-cap}, which solved a
problem posed by Howe-Lee \cite{Howe-Lee} for $\mathbb{F}=$ $\mathbb{R}$. Let
$r$ be an integer such that $1\leq r\leq n/2,$ and consider the three spaces
\begin{equation}
W=W_{n,r}=Mat_{n\times r}\left(  \mathbb{F}\right)  ,\;X=X_{n,r}=Mat_{n\times
r}^{\times}\left(  \mathbb{F}\right)  ,\;Y=Y_{n,r}=Gr_{n,r}\left(
\mathbb{F}\right)  \label{=WXY}%
\end{equation}
consisting, respectively, of all $\mathbb{F}$-matrices of shape $n\times r$,
the open subset of matrices of $\mathbb{F}$-rank $r$, and the Grassmannian of
$r$-dimensional $\mathbb{F}$-subspaces of $\mathbb{F}^{n}$. We write
$G_{n}=GL_{n}\left(  \mathbb{F}\right)  $ and $K_{n}=U_{n}\left(
\mathbb{F}\right)  $ for its maximal compact subgroup. Then $G_{n}$ and
$G_{r}$ act naturally on the left and right of $W$ and the actions preserve
$X$. Moreover we have natural isomorphims
\begin{equation}
Y\approx X/G_{r}\approx K_{n}/\left(  K_{r}\times K_{n-r}\right)  . \label{=Y}%
\end{equation}

We write $w^{\dag}=\overline{w}^{T}$ and consider the following
functions/differential operators on $W$%
\begin{equation}
\Psi\left(  w\right)  =\operatorname{det}\left(  w^{\dag}w\right)  ,\quad
L=\partial\left(  \Psi\right)  ,\quad C_{s,l}=\Psi^{s+l}L^{l}\Psi^{-s},
\label{=Csl}%
\end{equation}
where $\partial$ is the isomorphism between polynomials and differential
operators on $W$ induced by the pairing $\left\langle w_{1},w_{2}\right\rangle
=\mathrm{tr}\left(  w_{1}^{\dag}w_{2}\right)  $. Then $C_{s,l}$ is
$K_{n}\times G_{r}$ invariant and by (\ref{=Y}) it can be regarded as an
element of the algebra $\mathbf{D}_{K_{n}}\left(  Y\right)  $ of $K_{n}%
$-invariant operators on $Y=K_{n}/\left(  K_{r}\times K_{n-r}\right)  $, which
is a compact symmetric space of type $BC_{r}$.

By a result of Harish-Chandra \cite{He2} one has an algebra isomorphism
\[
\eta:\mathbf{D}_{K_{n}}\left(  Y\right)  \longrightarrow\mathcal{P}\left(
\mathfrak{a}^{\ast}\right)  ^{\mathsf{W}}%
\]
where $\mathfrak{a}$ is a Cartan subspace of $Y$, $\mathsf{W}$ is the
restricted Weyl group, and $\mathcal{P}\left(  \mathfrak{a}^{\ast}\right)
^{\mathsf{W}}$ is the algebra of $\mathsf{W}$-invariant polynomial functions
on $\mathfrak{a}^{\ast}$. We identify $\mathfrak{a}^{\ast}\approx
\mathbb{C}^{r}$ by choosing a basis $\{e_{i}\}$ of $\mathfrak{a}^{\ast}$ such
that the restricted roots are $\left\{  \pm e_{i},\pm2e_{i},\pm{e_{i}\pm
e_{j}}\right\}  $, and we set
\begin{equation}
\rho_{j}=d\left(  n/2-j+1\right)  -1,\quad\rho=\left(  \rho_{1},\ldots
,\rho_{r}\right)  . \label{=rho}%
\end{equation}
Our first result is a formula for the image of $C_{s,l}$ under the
Harish-Chandra isomorphism.

\begin{theorem}
\label{ThA} We have $\eta\left(  C_{s,l}\right)  =c_{s,l}\left(  z\right)  $
where
\begin{equation}
c_{s,l}\left(  z\right)  =\prod\nolimits_{i=0}^{l-1}\prod\nolimits_{j=1}%
^{r}\left[  \left(  2s-\rho_{1}+2i\right)  ^{2}-z_{j}^{2}\right]  .
\label{=csl}%
\end{equation}

\end{theorem}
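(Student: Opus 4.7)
The plan is to exploit the $K_n$-invariance of $C_{s,l}$ and compute its scalar eigenvalue on a well-chosen family of test functions; this eigenvalue, viewed as a polynomial in the Harish-Chandra parameter $z$, is by definition $\eta(C_{s,l})(z)$. Since $C_{s,l}$ originates as a differential operator on $W$, I would carry out the computation there and pull back the spherical functions on $Y$ to $W$.

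First I would introduce the family of conical (power) functions $\Phi_\mu(w)$ on $W$, built from powers of the principal minors of $w^\dag w$ and parametrised by $\mu = (\mu_1,\ldots,\mu_r)$. These are joint eigenfunctions for the commutative family of $K_n$-invariant, right-Cartan-equivariant operators on $W$, and after $K_n$-averaging and analytic continuation they realise the spherical functions $\phi_z$ on $Y$ via an affine identification $\mu_j = z_j + \rho_j$ (this is where (\ref{=rho}) enters). Because $C_{s,l}$ is $K_n \times G_r$-biinvariant, it acts on each $\Phi_\mu$ by a scalar, and this scalar, once rewritten in the $z$-coordinates, is the Harish-Chandra image.

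Next I would evaluate this scalar by running $\Phi_\mu \mapsto \Psi^{-s}\Phi_\mu \mapsto L^l \Psi^{-s}\Phi_\mu \mapsto \Psi^{s+l}L^l \Psi^{-s}\Phi_\mu$ one factor at a time. Multiplication by $\Psi^{\pm t}$ shifts $\mu$ additively, while the essential identity is a Bernstein--Sato/Capelli relation
\[
L\,\Phi_\nu \;=\; b(\nu)\,\Phi_{\nu - (1,\ldots,1)},
\]
in which $b(\nu)$ is an explicit product of $r$ linear factors in the $\nu_j$ with coefficients depending on $d$ and $n$. Iterating $L$ exactly $l$ times then produces $lr$ linear factors, whose accumulated product is $c_{s,l}(\mu)$ in $\mu$-coordinates.

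Finally I would translate back to the Harish-Chandra variable via $\mu = z + \rho$. By $\mathsf{W}$-invariance of $\eta(C_{s,l})$, each pair of resulting linear factors in $z_j$ must combine into the difference of squares $(2s-\rho_1+2i)^2 - z_j^2$; the telescoping between consecutive iterations of $L$ is what collapses the $j$-dependence of $\rho_j$ to the single value $\rho_1$ in the final answer and pins down the $i$-dependent shift $2s - \rho_1 + 2i$. This yields (\ref{=csl}). The main obstacle is the Bernstein--Sato/Capelli identity for $L$ acting on conical functions $\Phi_\mu$ uniformly in $\mathbb{F}=\mathbb{R},\mathbb{C},\mathbb{H}$: the real case was carried out in \cite{sahi-2013-cap} via the classical Capelli identity for $\mathfrak{gl}_n$, and the extension to $\mathbb{C}$ and $\mathbb{H}$ requires an analogous invariant-theoretic computation with the dual pair $(GL_r(\mathbb{F}),GL_n(\mathbb{F}))$ that respects the parameter $d$.
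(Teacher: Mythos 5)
Your strategy has two holes, and the first one is fatal as written. The functions $\Phi_{\mu}$ that you build from powers of the principal minors of $w^{\dag}w$ are \emph{left $K_{n}$-invariant} (since $(kw)^{\dag}(kw)=w^{\dag}w$); they are conical vectors for the \emph{right} $G_{r}$-action, i.e.\ eigenvectors of a Borel of $G_{r}$. Because $X=K_{n}x_{0}G_{r}$, any function on $X$ that is both left $K_{n}$-invariant and right $G_{r}$-invariant is constant, so "$K_{n}$-averaging" of the $\Phi_{\mu}$ can never produce the (nonconstant) spherical functions on $Y=K_{n}/M$; the bridge you assert between the scalar of $C_{s,l}$ on $\Phi_{\mu}$ and the Harish-Chandra image $\eta(C_{s,l})$ does not exist in the form stated. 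Indeed the scalar on $\Phi_{\mu}$ is the eigenvalue on a different joint $K_{n}\times G_{r}$-isotypic component (trivial $K_{n}$-type, nontrivial $G_{r}$-type) than the component $\mathcal{R}_{\mu}$ (nontrivial $K_{n}$-type, trivial $G_{r}$-type) which defines $\eta$. A rank-one check already shows the numbers disagree: for $r=1$, $\mathbb{F}=\mathbb{R}$, $C_{0}=|w|^{2}\Delta$ acts on the conical function $|w|^{2m}$ by $+2m(2m+n-2)$, but on $\mathcal{R}_{\mu}$ with $\mu=2m$ (harmonics of degree $2m$ times $|w|^{-2m}$) by $-2m(2m+n-2)$, which is the value $(2s-\rho_{1})^{2}-\bar{\mu}_{1}^{2}$ at $s=0$. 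So evaluating on your test family, with the affine identification you propose, yields the wrong polynomial; the actual highest weight vectors of $\mathcal{R}_{\mu}$ for $K_{n}$ are not functions of $w^{\dag}w$ alone, and relating the two sides is itself a nontrivial argument.

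The second hole is that even with correct test vectors, the identity $L\Phi_{\nu}=b(\nu)\Phi_{\nu-(1,\dots,1)}$ with an \emph{explicit} $b(\nu)$ is essentially equivalent to the theorem, and you defer exactly this to an unproven "analogous invariant-theoretic computation" for $\mathbb{C}$ and $\mathbb{H}$ (existence of some $b(\nu)$ follows from equivariance and a one-dimensionality argument, but its explicit form is the whole content). The paper is structured precisely to avoid any such Capelli/Bernstein--Sato evaluation: it shows the eigenvalue of $C_{s}=C_{s,1}$ on $\mathcal{R}_{\mu}$ is a $\mathcal{W}_{r}$-invariant polynomial $p(s,\bar{\mu})$ of controlled degree (via the Harish-Chandra homomorphism and an interpolation in $s$), proves the vanishing $c_{-\mu_{1}/2,\mu}=0$ by the soft observation that $C_{-m}J_{m}=\Psi^{-m+1}L$ and $L$ kills $\mathcal{P}^{m}_{\mu}$ when $\mu_{1}=2m$ (Lemma \ref{Lem: Rmm}), uses Zariski density and $\mathcal{W}_{r}$-invariance to force divisibility by $q(2t-\rho_{1},z)$, and pins the constant by a leading-term computation; the case of general $l$ then follows from the factorization $C_{s,l}=\prod_{i=0}^{l-1}C_{s+i}$. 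If you want to pursue your route, you must (a) replace the $\Phi_{\mu}$ by genuine $K_{n}$-highest weight vectors in $\mathcal{R}_{\mu}$ (or supply a correct transfer between the $G_{r}$-side and $K_{n}$-side eigenvalues, including the $\rho$-shift), and (b) actually prove the resulting Capelli-type evaluation for all three $\mathbb{F}$, which is a substantial computation the present proposal does not contain.
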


This is proved as Theorem \ref{Th:main1} below, and in sections
\ref{section-noncompact}, \ref{section-non-spherical} we prove analogous
results for the non-compact duals of the Grassmannians, and for non-spherical
line bundles.

For $\mathbb{F}=\mathbb{R}$ the operator $L=\partial\left(  \Psi\right)  $ is
the \emph{Cayley-Laplace} operator; see \cite{rubin-riesz} and the references
therein. Using Theorem \ref{ThA} one can give a new proof of several key
results of \cite{rubin-riesz}, and extend these results to $\mathbb{C}$,
$\mathbb{H}$. We intend to discuss this in a subsequent paper.

Theorem \ref{ThA} turns out to have a beautiful application to Radon
inversion. For $r\leq r^{\prime}$ the Radon transforms $R:Y_{n,r}\rightarrow
Y_{n,r^{\prime}}$ and $R^{\prime}:Y_{n,r^{\prime}}\rightarrow Y_{n,r}$ are
defined as follows%
\begin{equation}
Rf\left(  y^{\prime}\right)  =\int_{y\subset y^{\prime}}f\left(  y\right)
dy\text{,\quad}R^{\prime}f\left(  x\right)  =\int_{y^{\prime}\supset
y}f\left(  y^{\prime}\right)  dy^{\prime}, \label{=Rad}%
\end{equation}
where $dy,$ $dy^{\prime}$ are certain natural invariant measures. We now
assume that
\begin{equation}
\text{(i) }r\leq r^{\prime}\leq n-r\text{,\quad(ii) }\frac{d}{2}(r^{\prime
}-r)\in\mathbb{Z}\text{.} \label{=i-ii}%
\end{equation}

\begin{theorem}
\label{ThB} Under the conditions (\ref{=i-ii}) we have $\left(  SR^{\prime
}\right)  R=I$ where
\[
S=\frac{1}{c_{\beta,l}\left(  \rho\right)  }C_{\beta,l},\quad\beta=\frac{d}%
{2}(n-r^{\prime}),\quad l=\frac{d}{2}\left(  r^{\prime}-r\right)  .
\]

\end{theorem}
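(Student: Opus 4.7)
The strategy is to verify $SR'R = I$ by testing it on zonal spherical functions. Both $S = c_{\beta,l}(\rho)^{-1}C_{\beta,l}$ and the composition $T = R'R$ are $K_n$-equivariant endomorphisms of $L^2(Y_{n,r})$, and since $Y_{n,r} = K_n/(K_r \times K_{n-r})$ is a symmetric space, its regular representation is a multiplicity-free sum of spherical $K_n$-types. By Schur's lemma it therefore suffices to show that $SR'R$ acts as the identity on each zonal spherical function $\phi_\lambda$, for $\lambda$ in the discrete spectrum, which is a Zariski-dense subset of $\mathfrak{a}^\ast$.

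Theorem \ref{ThA} immediately gives
\[
S\phi_\lambda = \frac{c_{\beta,l}(\lambda)}{c_{\beta,l}(\rho)}\,\phi_\lambda .
\]
On the other hand $T\phi_\lambda = m(\lambda)\phi_\lambda$ for some scalar $m(\lambda)$, and the content of the theorem is the spectral identity
\[
m(\lambda) = \frac{c_{\beta,l}(\rho)}{c_{\beta,l}(\lambda)} .
\]
The natural route to proving this is to lift $\phi_\lambda$ to the matrix model $W_{n,r}$, where it becomes a symmetric function of Heckman-Opdam/Jacobi type for the root system $BC_r$ in the singular values of $w^{\dag}w$. The Radon transform $R\phi_\lambda(y')$ is then a fiber integral over the sub-Grassmannian $\{y : y \subset y'\} \cong Gr_{r',r}(\mathbb{F})$, and $R'R\phi_\lambda$ becomes a double $BC$-type Beta integral which can be evaluated by Selberg-Macdonald style formulas to yield a product of Gamma factors in the components of $\lambda$. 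Equivalently, one can exploit the factorization $C_{\beta,l} = \Psi^{\beta+l}L^l\Psi^{-\beta}$ from (\ref{=Csl}): on the open stratum $X_{n,r}$, the composition $R'R$ acts essentially as the Riesz-type potential $\Psi^{-\beta}$ (up to a dimensional constant), which $C_{\beta,l}$ inverts by virtue of the Cayley-Laplace identity embodied in Theorem \ref{ThA}.

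The principal obstacle is the explicit identification of $m(\lambda)$ with the ratio $c_{\beta,l}(\rho)/c_{\beta,l}(\lambda)$; concretely, this reduces to a product-of-Gamma identity whose factors must be matched against the quadratic factors $(2\beta - \rho_1 + 2i)^2 - z_j^2$ in (\ref{=csl}). Assumption (\ref{=i-ii})(ii) that $l = d(r'-r)/2 \in \mathbb{Z}$ is essential here: it guarantees that $C_{\beta,l}$ is an honest integer-order differential operator on $Y_{n,r}$, so that the inversion takes the elegant polynomial form stated. Once the eigenvalue formula for $m(\lambda)$ is verified on the discrete spectrum, Zariski density extends it to all $\lambda$, and the multiplicity-free spherical decomposition of $L^2(Y_{n,r})$ upgrades the pointwise identity $SR'R\phi_\lambda = \phi_\lambda$ to the operator identity $SR'R = I$ on the whole space.
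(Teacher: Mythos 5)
Your overall skeleton is the same as the paper's: both $S$ and $R'R$ are $K_n$-invariant operators on the multiplicity-free spherical decomposition of $C^{\infty}(Y)$, so by Schur's lemma everything reduces to showing that the eigenvalue of $R'R$ on $V_{\mu}$ is the reciprocal of the (normalized) eigenvalue of $C_{\beta,l}$ furnished by Theorem \ref{ThA}. But that reduction is the easy part, and your proposal stops exactly where the real work begins. You name the computation of $m(\lambda)$ (the paper's $\gamma_{\mu}$, Theorem \ref{gam-mu}) as ``the principal obstacle'' and then do not carry it out: the appeal to ``a double $BC$-type Beta integral evaluated by Selberg--Macdonald style formulas'' is not a proof, and the alternative claim that $R'R$ ``acts essentially as the Riesz-type potential $\Psi^{-\beta}$'' on $X_{n,r}$ is asserted without justification and is not how the compact-picture composition $R'R$ behaves. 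This gap is not cosmetic: the paper points out that the closed formula one would be tempted to quote from the literature (Grinberg, Theorem 6.2) is \emph{wrong} for $\mathbb{F}=\mathbb{H}$, so the Gamma-product identity you defer to cannot simply be looked up.

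Concretely, the missing content is the following chain. The representation of $R'R$ as integration against $|\operatorname{Sin}y|^{-2\beta}$ (formula (\ref{radon-ver-sine})) is only valid when $r\le r'/2$, because only then does the incidence set $A_{\eta_{0}}\cong Gr_{r',r}$ have full rank $r$; the general case of (\ref{=i-ii}) is reached afterwards by showing $\gamma_{\mu}$ and the relevant special values are rational in $n,r'$ and arguing by Zariski density in the \emph{parameters}, not in $\lambda$. Even in the range $r\le r'/2$, the sine-integral is evaluated not by a direct Selberg computation but through the identity $\gamma_{\mu}=\eta_{\mu}(-\beta)\,\phi_{\mu}(y_{1})$ together with the special value $\phi_{\mu}(y_{1})=\eta_{\mu}(-\alpha)$, which the paper obtains from the Gelfand integral formula interpreted as the double Radon transform $R_{r,n-r}R_{n-r,r}$, plus an injectivity argument to rule out $\phi_{\mu}(y_{1})=0$. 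Finally, matching the resulting Pochhammer ratio $\frac{(\alpha)_{\mathbf m}(\beta)_{\mathbf m}}{(\alpha+l)_{\mathbf m}(\beta+l)_{\mathbf m}}$ against the quadratic factors of $c_{\beta,l}$ in (\ref{=csl}) is itself a nontrivial manipulation using $\rho_{j}-\rho_{1}=d(j-1)$ and $\alpha+\beta+l=dn/2$, and it is here (not merely in making $C_{\beta,l}$ a differential operator) that the integrality hypothesis $l\in\mathbb{Z}$ of (\ref{=i-ii}) is used. Without these steps your argument establishes only that $SR'R$ acts by \emph{some} scalar on each $V_{\mu}$, not that the scalar is $1$.
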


This is proved as Theorem \ref{Th: main2} below.

We refer the reader to \cite{GGG, He1, He2} for background on the Radon
transform and its modern interpretation in the context of homogeneous spaces
and integral geometry. We also refer the reader to \cite{GGR}, \cite{GK},
\cite{Grinberg-jdg}, \cite{grinberg-rubin-ann}, \cite{Kakehi},
\cite{Oshima-capelli}, \cite{Oda-Oshima} for other examples of Radon inversion
formulas, and to \cite{AB}, \cite{AGS}, \cite{Olaf-Pasq-Rub},
\cite{gz-radon-imrn} for the closely related cosine transform. For
$\mathbb{F}=\mathbb{R}$, Rubin \cite[Th 8.2]{rubin-funk} has recently obtained
an inversion formula for the Funk transform on Stiefel manifolds, which is
essentially equivalent to Theorem \ref{ThB}, as explained in Remark
\ref{RubinFunk} below. Thus our result constitutes a generalization of Rubin's result.

We refer the reader to \cite{Howe-inv-thy, HU}, \cite{KostantSahiAdv,
Kostant-Sahi-inv} for background and perspective on the Capelli identity, to
\cite{Sahi-cap-unitary}, \cite{Sahi-unitary Shilov} for applications to
unitary representations, and to \cite{Knop-Sahi}, \cite{Sahi-interp} for the
connection with Jack polynomials and Macdonald polynomials. The papers
\cite{KostantSahiAdv, Kostant-Sahi-inv} emphasize the point of view that the
classical Capelli identity can be understood as the computation of the
eigenvalues of a certain invariant differential operator, or equivalently the
computation of its Harish-Chandra image. This is the perspective adopted in
\cite{sahi-spec, sahi-2013-cap} and also in the present paper.

The paper is organized as follows. In section \ref{section-Matrix space} we
introduce the necessary background, including a precise description of the
differential operators $C_{s,l}$ and the module structure of the space
$\mathcal{R}$ of $G_{r}$-invariant and $K$-finite functions on $X$. In section
\ref{section-Capelli identity} we prove our first main result, Theorem
\ref{Th:main1}. This result is extended in sections \ref{section-noncompact}
and \ref{section-non-spherical} to the non-compact and non-spherical settings,
respectively. Finally in section \ref{section-Radon} we recall some basic
facts about the Radon transform and prove our second main result, Theorem
\ref{Th: main2}.

We now discuss briefly the main ideas behind the proofs of our results.

To compute the Harish-Chandra image $c_{s,l}=\eta\left(  C_{s,l}\right)  $, we
follow the approach of Kostant-Sahi \cite{KostantSahiAdv},
\cite{Kostant-Sahi-inv}, \cite{sahi-spec}. Let $\mathcal{R}$ be the algebra of
$K_{n}$-finite functions on $Y=K_{n}/\left(  K_{r}\times K_{n-r}\right)  $.
The irreducible $K_{n}$-submodules ($K_{n}$-types) of $\mathcal{R}$ occur with
multiplicity $1$, and are uniquely determined by their highest weights $\mu
\in\mathfrak{a}^{\ast}\approx\mathbb{C}^{r}$. On each such $K_{n}$-type the
operator $C_{s,l}$ acts by the scalar $c_{s,l}\left(  \mu+\rho\right)  $,
where $\rho$ is the half sum of positive roots. The operator $C_{s}=C_{s,1}$
vanishes on certain $K_{n}$-types and we show that these vanishing conditions
suffice to \emph{characterize} the polynomial $c_{s}=c_{s,1}$ up to an overall
scalar, which is then determined by an auxiliary computation. This proves
Theorem \ref{Th:main1} for $C_{s,1}$ and the result for $C_{s,l}$ follows by a
factorization argument.

We consider also the non-compact symmetric space $D$ dual to the Grassmannian
$Y$ and study corresponding operators $\tilde{C}_{s,l}$. The space $D$ can be
realized as an open domain in $Y$. The eigenvalues of $\tilde{C}_{s,l}$ on the
Harish-Chandra spherical functions can be found by using a well-known
observation \cite{He2, He3} that the spherical polynomials on $Y$ restricted
to $D$ are the Harish-Chandra functions. When $Y$ and $D$ are Hermitian
symmetric compact or non-compact spaces, the operators $\tilde{C}_{s,l}$ also
act on sections of homogeneous line bundles, and we find the corresponding eigenvalues.

The main idea behind the proof of Theorem \ref{Th: main2} is straightforward
-- one simply compares the eigenvalues of $R^{\prime}R$ to those of $C_{s,l}$.
The eigenvalues of $R^{\prime}R$ are computed explicitly in
\cite{Grinberg-jdg} for $\mathbb{F}=\mathbb{C}$ and stated without proof for
$\mathbb{R},\mathbb{H}$. However the statement in \cite[Theorem 6.2]%
{Grinberg-jdg} for $\mathbb{H}$ is not correct, and it was not clear to us how
to extend the ideas of \cite{Grinberg-jdg} to obtain the right formula. Thus
in this paper we give a completely different argument using ideas from
\cite{gz-radon-imrn, gz-radon-bsd}, where the Radon transform is related to
the spherical transform of the sine functions on Grassmannians, and its
eigenvalues are related to special values of spherical polynomials. The
results of \cite{gz-radon-imrn, gz-radon-bsd} are proved under certain
restrictions on $r,n$ and involve certain undetermined constants. In this
paper we complete these arguments by relaxing the restrictions and explicitly
determining the relevant constants. This leads to the proof of Theorem
\ref{Th: main2}.

\textbf{Acknowledgement. }We thank Eitan Sayag for suggesting a connection
between the Capelli identity and the Radon transform, and for helpful
discussions at an early stage of this project. We also thank Semyon Alesker,
Sigurdur Helgason, Toshiyuki Kobayashi, Toshio Oshima, and Boris Rubin for
many helpful comments and suggestions.

\section{Matrix spaces and Grassmannians\label{section-Matrix space}}

\subsection{Matrix spaces and Jordan algebras}

For $\mathbb{F=R}$, $\mathbb{C}$, $\mathbb{H}$, let $A=A_{r}(\mathbb{F})$
denote the space $r\times r$ Hermitian $\mathbb{F}$-matrices
\[
A=\left\{  u\in Mat_{r\times r}\left(  \mathbb{F}\right)  \mid u^{\dag
}=u\right\}  \text{.}%
\]
Then $A$ is a Euclidean Jordan algebra, and we write $e$ for the identity
element and $\operatorname{tr}$ and $\operatorname{det}$ for the trace form
and the Jordan norm polynomial. We refer the reader to \cite{FK-book} for
basic results and terminology related to Jordan algebras.

\begin{lemma}
For $u\in A$ let $D_{u}$ denote the directional derivative, then we have
\begin{equation}
\left(  D_{u}\operatorname{det}\right)  (e)=\operatorname{tr}(u).
\label{eq:def-tr}%
\end{equation}

\end{lemma}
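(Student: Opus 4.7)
The plan is to reduce to the one-dimensional case via the spectral decomposition in the Euclidean Jordan algebra $A = A_r(\mathbb{F})$, since both sides of \eqref{eq:def-tr} are defined purely in Jordan-theoretic terms (the Jordan norm polynomial $\operatorname{det}$ and the Jordan trace form $\operatorname{tr}$).

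First I would invoke the spectral theorem for Euclidean Jordan algebras (\cite{FK-book}, Theorem III.1.2): given $u \in A$, there exist real eigenvalues $\lambda_1, \dots, \lambda_r$ and a Jordan frame of primitive orthogonal idempotents $c_1, \dots, c_r$ with $c_1 + \cdots + c_r = e$ such that
\[
u = \sum_{i=1}^r \lambda_i c_i, \qquad \operatorname{tr}(u) = \sum_{i=1}^r \lambda_i.
\]
Then $e + tu = \sum_i (1 + t\lambda_i) c_i$. Since the Jordan norm polynomial is multiplicative on the associative subalgebra generated by the Jordan frame and satisfies $\operatorname{det}(c_i) = 0$ in the spectral decomposition with $\operatorname{det}\bigl(\sum \alpha_i c_i\bigr) = \prod \alpha_i$, we obtain
\[
\operatorname{det}(e + tu) = \prod_{i=1}^r (1 + t\lambda_i).
\]

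Differentiating this expression in $t$ and setting $t=0$ gives $\sum_i \lambda_i = \operatorname{tr}(u)$, which is precisely \eqref{eq:def-tr}. The only step that requires any care is the multiplicativity of $\operatorname{det}$ on the spectral decomposition; this follows from the fact that the $c_i$ span a copy of $\mathbb{R}^r$ inside $A$ on which the generic minimal polynomial factors as $\prod(T-\alpha_i)$, so the norm polynomial restricts to the ordinary product. Alternatively, one could observe that \eqref{eq:def-tr} is nothing other than the linear term in the expansion of the characteristic polynomial of $u$ in $A$, which by the general theory of Jordan algebras equals the Jordan trace. I do not foresee any real obstacle — this lemma is a direct consequence of the standard dictionary between the Jordan trace, Jordan norm, and spectral decomposition.
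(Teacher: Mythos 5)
Your argument is correct. Note, however, that the paper does not actually prove this lemma: it simply cites \cite[Proposition III.4.2]{FK-book}, where the differential of the Jordan determinant is computed (at a general invertible point $x$ one has $D_u\operatorname{det}(x)=\operatorname{det}(x)\operatorname{tr}(x^{-1}u)$, which at $x=e$ gives the statement). Your route---complete spectral decomposition $u=\sum_i\lambda_i c_i$ with respect to a Jordan frame, the identity $\operatorname{det}\bigl(\sum_i\alpha_i c_i\bigr)=\prod_i\alpha_i$, hence $\operatorname{det}(e+tu)=\prod_i(1+t\lambda_i)$, and differentiation at $t=0$---is a self-contained and standard derivation of the same fact, and it is essentially the argument underlying the cited reference (in \cite{FK-book} the eigenvalues and the identity $\operatorname{det}=\prod\lambda_i$, $\operatorname{tr}=\sum\lambda_i$ come from Theorem III.1.2 and the generic minimal polynomial). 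The only step that deserves the care you already give it is precisely the identification $\operatorname{det}\bigl(\sum_i\alpha_i c_i\bigr)=\prod_i\alpha_i$, which is part of the spectral theorem rather than a separate multiplicativity property; since $\operatorname{det}(e+tu)$ is a polynomial in $t$, the differentiation step is immediate. So your proposal buys a proof from first principles of Jordan theory where the paper defers to the literature, at the cost of invoking the spectral theorem, which is a heavier tool than the direct computation of the differential of $\operatorname{det}$.
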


\begin{proof}
See \cite[Proposition III.4.2]{FK-book}.
\end{proof}

The structure group of $A$ is $G_{r}=GL_{r}\left(  \mathbb{F}\right)  $, which
acts on $A$ by $g:u\mapsto gug^{{\dagger}}$. The automorphism group is
$K_{r}=U\left(  r,\mathbb{F}\right)  $, which is the stabilizer of $e$. The
structure group preserves the determinant up to a scalar multiple; thus we
have%
\begin{equation}
\operatorname{det}(gug^{{\dagger}})=\nu(g)\operatorname{det}(u), \label{eq:nu}%
\end{equation}
where $\nu$ is a certain character of $G_{r}$.

Let $W=Mat_{n\times r}(\mathbb{F})$ be the space of $n\times r$ matrices,
where we assume as before that%
\[
r\leq n-r.
\]
We have a natural map $Q:W\rightarrow A$,%
\[
Q\left(  w\right)  =w^{{\dagger}}w.
\]
By polarization we get a positive definite inner product $\left\langle
\cdot,\cdot\right\rangle $ on $W$ satisfying%
\[
\left\langle w,w\right\rangle =\operatorname{tr}(w^{{\dagger}}w),
\]
the trace being computed for real linear transformations. This gives us an
isomorphism between polynomials and constant coefficient differential
operators
\begin{equation}
\partial:\mathcal{P}\left(  W\right)  \approx\mathcal{D}\left(  W\right)
\label{=ptl}%
\end{equation}

Let $X\subset W$ be the open subset of matrices of rank $r$, then $Q\left(
X\right)  $ is the positive cone of $A$. Moreover if%
\begin{equation}
x_{0}=%
%TCIMACRO{\QATOPD{[}{]}{I_{r}}{0} }%
%BeginExpansion
\genfrac{[}{]}{0pt}{}{I_{r}}{0}
%EndExpansion
\label{=x0}%
\end{equation}
then $x_{0}\in X$ and $Q\left(  x_{0}\right)  =e.$ The group $G_{n}\times
G_{r}$ acts naturally on $W$, and the polynomial
\begin{equation}
\Psi(w)=\operatorname{det}Q\left(  w\right)  =\operatorname{det}\left(
w^{{\dagger}}w\right)  \label{=Psi}%
\end{equation}
transforms under $K_{n}\times G_{r}$ as follows:
\begin{equation}
\Psi(kwg)=\operatorname{det}(g^{\dag}w^{{\dagger}}k^{{\dagger}}%
kwg)=\operatorname{det}(g^{\dag}w^{{\dagger}}wg)=\nu(g^{{\dagger}%
})\operatorname{det}(w^{{\dagger}}w)=\nu(g)\Psi(w). \label{=Psikg}%
\end{equation}

\subsection{Grassmannians and invariant differential operators}

The Grassmannian $Y=Gr_{n,r}(\mathbb{F})$, consisting of $r$ dimensional
subspaces of $\mathbb{F}^{n}$, has several different realizations that will
play a role below. First, we have a $G=G_{n}$ equivariant map%
\begin{equation}
\operatorname{col}:X\rightarrow Y, \label{=col}%
\end{equation}
where $\operatorname{col}\left(  x\right)  $ is the column space of $x$. This
descends to a homeomorphism
\begin{equation}
X/G_{r}\approx Y, \label{=XGr}%
\end{equation}
which realizes $X$ as the principal $G_{r}$-bundle associated to the
tautological bundle on $Y$. Also, the group $K=K_{n}$ acts transitively on $Y$
and the stabilizer in $K$ of
\[
y_{0}=\operatorname{col}\left(  x_{0}\right)
\]
is the symmetric subgroup $M=K_{r}\times K_{n-r}$. This realizes $Y$ as
compact symmetric space%
\[
Y=K/M.
\]
Finally, the stabilizer of $y_{0}$ in $G$ is a maximal parabolic subgroup $P$,
thus we get%
\begin{equation}
Y=G/P. \label{=GP}%
\end{equation}
This realization is of importance when studying principal series
representations of $G$, \cite{Sahi-crelle, gkz-ma}.

Let $C^{\infty}(X)^{G_{r}}$ be the space of $G_{r}$-invariant smooth functions
on $X$; by (\ref{=XGr}) we get a $G$-equivariant isomorphism
\[
C^{\infty}(X)^{G_{r}}\approx C^{\infty}(Y).
\]
More generally if $\nu$ is as in (\ref{eq:nu}) and $s\in\mathbb{C}$ we can
consider the space of $G_{r}$-equivariant functions%
\[
C^{\infty}(X)^{G_{r},s}=\{f\in C^{\infty}(X):f(xg)=\nu(g)^{s}f(x)\}.
\]
Note that $\Psi(x)=\operatorname{det}\left(  x^{{\dagger}}x\right)  $ is
positive on $X$ and hence $\Psi^{s}$ is a well-defined function on $X$. Now
(\ref{=Psikg}) implies that multiplication by $\Psi^{s}$ gives a $K_{n}%
$-equivariant isomorphism%
\[
f\mapsto\Psi^{s}f:C^{\infty}(X)^{G_{r},s}\approx C^{\infty}(X)^{G_{r}}\approx
C^{\infty}(Y)\text{.}%
\]

We now introduce the differential operators which play a key role in this paper.

\begin{definition}
With $\partial$ as in (\ref{=ptl}), $s\in\mathbb{C}$, and $l\in\mathbb{N}$, we
define
\begin{equation}
L=\partial(\Psi),\quad C_{s,l}=\Psi^{s+l}L^{l}\Psi^{-s},\quad C_{s}=C_{s,1}
\label{eq:C}%
\end{equation}

\end{definition}

Clearly $C_{s,l}$ is a $K\times G_{r}$-invariant differential operator on
$C^{\infty}(X)$. Now by a standard argument \cite[Chapt. II, Sect. 3]{He2}
applied to (\ref{=XGr}), we conclude that $C_{s,l}$ defines a $K$-invariant
differential operator on $Y$%
\[
C_{s,l}:C^{\infty}(Y)\rightarrow C^{\infty}(Y).
\]
More generally if $t$ is another complex number, then
\[
C_{s,l}:C^{\infty}(X)^{G_{r},t}\rightarrow C^{\infty}(X)^{G_{r},t}.
\]

\subsection{$K$-types on the Grassmannian}

\label{sec-K}

To study the action of the operators $C_{s,l}$ it is convenient to pass to an
algebraic setting. We write $\mathcal{P}$ for the algebra of polynomial
functions on $W$, and for each positive integer $m$ we define%
\[
\mathcal{P}^{m}=C^{\infty}(X)^{G_{r},m}\cap\mathcal{P}.
\]
Also let $\mathcal{R}$ be the subspace of $C^{\infty}(X)^{G_{r}}$ consisting
of $K$-finite functions. The operators $C_{s,l}$ preserve the spaces
$\mathcal{R}$ and $\mathcal{P}^{m}$, and we now describe their $K$-module structures.

As noted above, the Grassmannian $Y=K/M$ is a compact symmetric space, and we
fix some notation relevant to this structure. Let $\mathfrak{k}$ and
$\mathfrak{m}$ denote the complexified Lie algebras of $K=K_{n}$ and
$M=K_{r}\times K_{n-r}$ , and fix a Cartan decomposition and Cartan
subalgebra
\begin{equation}
\mathfrak{k}=\mathfrak{m}+\mathfrak{p,\quad h=t}+\mathfrak{a} \label{=Cartan}%
\end{equation}
where $\mathfrak{a\subset p}$ is a Cartan subspace and $\mathfrak{t}$ is the
centralizer of $\mathfrak{a}$ in $\mathfrak{m}$. Since $r\leq n-r$ by
assumption, the restricted root system $\Sigma\left(  \mathfrak{a}%
,\mathfrak{k}\right)  $ of type $BC_{r}$. We fix a basis $\{e_{i}\}$ of
$\mathfrak{a}^{\ast}$ such that the positive restricted roots $\alpha$ and
their multiplicities $m_{\alpha}$ are given in terms of $d=\dim\mathbb{F}$ as
follows
\[%
\begin{tabular}
[c]{|c|c|c|c|}\hline
$\alpha$ & $e_{i}$ & $2e_{i}$ & ${e_{i}\pm e_{j}}$\\\hline
$m_{\alpha}$ & $d\left(  n-2r\right)  $ & $d-1$ & $d$\\\hline
\end{tabular}
\ \ \ \text{.}%
\]
The half sum of the positive roots is $\rho=\sum_{j=1}^{r}\rho_{j}e_{j}$
where
\begin{equation}
\rho_{j}=\frac{1}{2}\left[  2d(r-j)+2(d-1)+d(n-2r)\right]  =d\left(
n/2-j+1\right)  -1. \label{rho}%
\end{equation}

Let $\Lambda$ be the set of even partitions of length $\leq r$:
\begin{equation}
\Lambda=\left\{  \left(  \mu_{1},\ldots,\mu_{r}\right)  \in\left(
2\mathbb{Z}\right)  ^{r}:\mu_{1}\geq\cdots\geq\mu_{r}\geq0\right\}  .
\label{=Lam}%
\end{equation}
Then $\Lambda$ parametrizes the set $\left(  K/M\right)  ^{\wedge}$ of
equivalence classes of irreducible $M$-spherical representations of $K$ as
follows. We identify $\mu\in$ $\Lambda$ with the $\mathfrak{h}$-weight that
vanishes on $\mathfrak{t}$ and restricts to $\sum\mu_{j}e_{j}$ on
$\mathfrak{a}$. For $\mathbb{F}=\mathbb{C}$ and $\mathbb{H}$ the group
$K=U(n,\mathbb{F)}$ is connected, and we write $V_{\mu}$ for the irreducible
$K$-module with highest weight $\mu\in\Lambda$. If $\mathbb{F}=\mathbb{R}$
then $K$ is the disconnected group $O\left(  n,\mathbb{R}\right)  $ and we let
$\tilde{V}_{\mu}$ denote the irreducible representation of $K_{o}=SO\left(
n,\mathbb{R}\right)  $ with highest weight $\mu$. Then $\tilde{V}_{\mu}$
extends uniquely to an $M$-spherical representation $V_{\mu}$ of $K$; however
if $r=n-r$ then we define%
\[
V_{\mu}=ind_{K_{o}}^{K}\left(  \tilde{V}_{\mu}\right)  .
\]
By the Cartan-Helgason theorem, the map $\mu\mapsto V_{\mu}$ gives a bijection
between $\Lambda$ and $\left(  K/M\right)  ^{\wedge}$. The following result is standard.

\begin{lemma}
\label{Lem:Rm}As a $K$-module, the algebra $\mathcal{R}$ admits a
multiplicity-free direct sum decomposition $\mathcal{R}=\oplus_{\mu\in\Lambda
}\mathcal{R}_{\mu}$ where $\mathcal{R}_{\mu}\approx V_{\mu}$.
\end{lemma}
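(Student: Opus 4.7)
The plan is to combine the Peter--Weyl theorem with the Cartan--Helgason theorem. Since $\mathcal{R}$ consists of the $K$-finite functions on $Y=K/M$, Peter--Weyl together with Frobenius reciprocity yield a $K$-module isomorphism
\[
\mathcal{R}\;\simeq\;\bigoplus_{\pi\in\widehat{K}}(\dim V_{\pi}^{M})\cdot V_{\pi},
\]
so the task reduces to identifying the $M$-spherical $K$-types and verifying that in each of them the $M$-fixed subspace is one-dimensional.

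For $\mathbb{F}=\mathbb{C},\mathbb{H}$ the group $K$ is connected and the Cartan--Helgason theorem (see \cite{He2}) applies directly to the symmetric pair $(K,M)$: the $M$-spherical $K$-types are indexed by the dominant $\mathfrak{h}$-weights $\mu$ that vanish on $\mathfrak{t}$ and whose restriction to $\mathfrak{a}^{\ast}$ satisfies $2\langle\mu,\alpha\rangle/\langle\alpha,\alpha\rangle\in\mathbb{Z}_{\geq 0}$ for every positive restricted root $\alpha$, and each such $K$-type carries a one-dimensional $M$-fixed subspace. Reading the multiplicities from the $BC_{r}$ table in the excerpt identifies this set of weights precisely with the set $\Lambda$ of even partitions of length $\leq r$, giving the asserted decomposition.

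For $\mathbb{F}=\mathbb{R}$ the group $K=O(n)$ is disconnected, and I would first apply Cartan--Helgason to the connected pair $(K_{o},M_{o})$ to decompose $\mathcal{R}$ as an $SO(n)$-module in terms of the $\tilde V_{\mu}$, and then pass up to $K$. When $r<n-r$, the element $-I_{n}$ lies in $M$ and each $\tilde V_{\mu}$ extends uniquely to an $M$-spherical $K$-representation $V_{\mu}$, preserving the one-dimensional $M$-fixed subspace. The subtle case is $r=n/2$: here the orthogonal complement map is realized by an outer element of $K$ that does not lie in $M$, so each $\tilde V_{\mu}$ with $\mu_{r}\neq 0$ appears twice in the $K_{o}$-decomposition of $\mathcal{R}$, and one must verify that the induced representation $V_{\mu}=\Ind_{K_{o}}^{K}\tilde V_{\mu}$ packages these pairs into a single $K$-irreducible still carrying a unique $M$-fixed line. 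This last verification is the principal obstacle; beyond it, the multiplicity-one decomposition follows directly.
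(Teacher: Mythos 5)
Your overall strategy (Peter--Weyl plus Frobenius reciprocity plus the Cartan--Helgason theorem) is exactly the standard argument the paper has in mind: the paper gives no proof of this lemma, calling it standard right after invoking Cartan--Helgason for the bijection between $\Lambda$ and $\left(K/M\right)^{\wedge}$. However, two of your steps do not go through as written. First, you state the Cartan--Helgason condition as $2\langle\mu,\alpha\rangle/\langle\alpha,\alpha\rangle\in\mathbb{Z}_{\geq0}$ for all positive restricted roots, whereas the correct condition is $\langle\mu,\alpha\rangle/\langle\alpha,\alpha\rangle\in\mathbb{Z}_{\geq0}$. This is not cosmetic: for $\mathbb{F}=\mathbb{C},\mathbb{H}$ it is precisely the long roots $2e_{i}$ (present because their multiplicity $d-1$ is positive) that give $\langle\mu,2e_{i}\rangle/\langle2e_{i},2e_{i}\rangle=\mu_{i}/2\in\mathbb{Z}_{\geq0}$ and hence force every $\mu_{i}$ to be even; with the condition as you wrote it the same roots only give $\mu_{i}\in\mathbb{Z}_{\geq0}$, so ``reading off the $BC_{r}$ table'' would produce all partitions rather than the set $\Lambda$ of even partitions, and the identification that the lemma asserts fails at exactly the decisive point.

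Second, for $\mathbb{F}=\mathbb{R}$ the evenness cannot come from restricted-root integrality at all: there $d-1=0$, so $2e_{i}$ is not a restricted root, and Cartan--Helgason for the connected pair $(K_{o},M_{o})$ yields weights that need not be even (already for $r=1$ it gives spherical harmonics on $S^{n-1}$ of every degree). The missing step in ``then pass up to $K$'' is to show that the component group of $M=O(r)\times O(n-r)$ acts by a sign on the one-dimensional $M_{o}$-fixed line of $\tilde V_{\mu}$ and that this sign is trivial exactly when all $\mu_{i}$ are even; observing that $-I_{n}\in M$ does not by itself deliver this. Finally, you explicitly leave the case $r=n/2$ unverified, and your description of it is slightly off: since $S(O(r)\times O(r))$ is the full fixed-point group of the involution in $SO(n)$, multiplicities for the connected group are still one, and what actually happens for $\mu_{r}\neq0$ is that the two distinct $K_{o}$-types with highest weights $(\mu_{1},\ldots,\mu_{r-1},\pm\mu_{r})$ each occur once, restrict to the same parameter on $\mathfrak{a}$, are interchanged by conjugation by a reflection lying in $M$, and fuse into the single irreducible $K$-module $ind_{K_{o}}^{K}\bigl(\tilde V_{\mu}\bigr)$ with a one-dimensional $M$-fixed subspace (for $\mu_{r}=0$ one argues as in the case $r<n-r$). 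These points are where the actual content of the lemma lies, so as it stands the proposal has genuine gaps.
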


We need a similar description of $\mathcal{P}^{m}$. For this we define%
\begin{equation}
\Lambda^{m}=\left\{  \mu\in\Lambda\mid\mu_{1}\leq2m\right\}  . \label{=Lam-m}%
\end{equation}

\begin{lemma}
\label{Lem: Rmm} We have a decomposition $\mathcal{P}^{m}=\oplus_{\mu
\in\Lambda^{m}}\mathcal{P}_{\mu}^{m}$, where $\mathcal{P}_{\mu}^{m}\approx
V_{\mu}.$
\end{lemma}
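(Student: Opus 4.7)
The plan is to deduce Lemma \ref{Lem: Rmm} from Lemma \ref{Lem:Rm} by a multiplicative twist by $\Psi^m$. By (\ref{=Psikg}), $\Psi^m$ is $K_n$-invariant, transforms by $\nu^m$ under $G_r$, and is positive (hence nonvanishing) on $X$, so multiplication by $\Psi^m$ defines a $K_n$-equivariant linear bijection from $\mathcal{R}$ onto the space $\mathcal{R}^m$ of $K_n$-finite elements of $C^\infty(X)^{G_r,m}$. Lemma \ref{Lem:Rm} then yields the multiplicity-free decomposition $\mathcal{R}^m = \bigoplus_{\mu \in \Lambda}\Psi^m\mathcal{R}_\mu$ with each summand $K_n$-isomorphic to $V_\mu$.

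Next I would check $\mathcal{P}^m \subset \mathcal{R}^m$. Any $f \in \mathcal{P}^m$ is polynomial, hence $K_n$-finite, and satisfies $f(xg) = \nu(g)^m f(x)$ on $X$, so $\Psi^{-m}f$ is a $G_r$-invariant $K_n$-finite smooth function on $X$. Multiplicity-freeness of $\mathcal{R}^m$ then forces $\mathcal{P}^m = \bigoplus_{\mu \in S}\mathcal{P}^m_\mu$ with $\mathcal{P}^m_\mu := \mathcal{P}^m \cap \Psi^m\mathcal{R}_\mu$ either $0$ or all of $\Psi^m\mathcal{R}_\mu \simeq V_\mu$. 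The task reduces to proving that the set of $K_n$-types $S \subset \Lambda$ is exactly $\Lambda^m = \{\mu \in \Lambda : \mu_1 \leq 2m\}$.

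For the inclusion $S \subset \Lambda^m$, I would use the scaling action of the center $\{tI_r : t \in \mathbb{F}^\times\}$ of $G_r$: one has $\nu(tI_r) = |t|^{2r}$, so every $f \in \mathcal{P}^m$ satisfies $f(tx) = |t|^{2mr}f(x)$, i.e., $f$ is homogeneous of real degree $2mr$ on $W$; comparing this with the polynomial degree of a $K_n$-highest-weight generator of $\Psi^m\mathcal{R}_\mu$ forces $\mu_1 \leq 2m$. For the reverse inclusion $\Lambda^m \subset S$, I would exhibit, for each $\mu = 2\lambda \in \Lambda^m$, an explicit polynomial $F_\mu \in \mathcal{P}^m$ generating the $K_n$-type $V_\mu$. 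The natural construction uses Jordan-algebra generalized power functions and $r \times r$ minors of $w$: writing $\lambda^\vee = (m-\lambda_r,\ldots,m-\lambda_1)$, which is a genuine partition precisely because $\lambda_1 \leq m$, one takes $F_\mu$ to be a $K_r$-symmetrization of $\Delta^{\lambda^\vee}(w^\dagger w)$ multiplied by a suitable $K_n$-highest-weight minor factor; the identity $\Delta^\lambda(u)\Delta^{\lambda^\vee}(u) = \det(u)^m$ on the positive cone then gives the required $G_r$-equivariance of weight $m$.

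The hardest step is this explicit construction and the verification that $F_\mu$ generates the correct $K_n$-type $V_\mu$; it requires matching the $K_r$-spherical structure on the Jordan-algebra side with the ambient $K_n$-type structure on the Grassmannian. A cleaner route, if one prefers, is to invoke the classical $(K_n, G_r)$-Howe-type decomposition of $\mathcal{P}(W)$ and extract the $\nu^m$-isotypic $G_r$-component, whose $K_n$-type content is $\Lambda^m$ directly, yielding the decomposition claimed in the lemma.
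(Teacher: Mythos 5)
The paper does not actually prove this lemma; it simply cites \cite[Proposition 6.4]{gz-radon-imrn}, \cite[Theorem 5.3]{gkz-JLT} and \cite{Kostant-Sahi-inv}, so any self-contained argument is a different route by default. Your skeleton is sound: $\Psi^m$ is nonvanishing on $X$ and $K_n$-invariant, so it carries $\mathcal{R}$ isomorphically onto the $K_n$-finite part $\mathcal{R}^m$ of $C^\infty(X)^{G_r,m}$; $\mathcal{P}^m$ is a $K_n$-submodule of the multiplicity-free module $\mathcal{R}^m$, hence is $\bigoplus_{\mu\in S}\Psi^m\mathcal{R}_\mu$ for some $S\subset\Lambda$; and the whole content is $S=\Lambda^m$. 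Your closing suggestion --- take the $\nu^m$-isotypic $G_r$-component of the $(K_n,G_r)$ Cauchy/Howe decomposition of $\mathcal{P}(W)$ (which is the $K_{n,\mathbb{C}}$-module attached to the rectangular partition $(2m)^r$) and branch to $K_n$ --- is essentially the proof in the cited references, and if you execute it (Littlewood-type branching in the stable range $n\geq 2r$) you are done.

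However, the two arguments you actually offer for $S=\Lambda^m$ both have genuine gaps. For $S\subset\Lambda^m$, the central character of $G_r$ only tells you that every element of $\mathcal{P}^m$ is homogeneous of degree $2rm$; but \emph{every} $K_n$-type occurring in $\mathcal{P}^m$ sits in that same degree (indeed $\Psi^m$ itself has degree $2rm$), so no degree comparison can distinguish admissible $\mu$ from inadmissible ones, and in particular cannot force $\mu_1\leq 2m$. To get the bound you must use the full $\nu^m$-equivariance under $G_r$ (which pins the $G_r$-type to the one-dimensional $\nu^m$ and hence the $\lambda$ in the Cauchy decomposition to the rectangle $(2m)^r$, whence $\mu\subset(2m)^r$), or else exhibit the highest weight vector of $\mathcal{R}_\mu$ as an explicit rational function whose denominator is exactly $\Psi^{\mu_1/2}$ --- which is precisely the computation you have not done. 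For $\Lambda^m\subset S$, your candidate $F_\mu$ is not pinned down enough to verify that it lies in $\mathcal{P}^m$ and generates $V_\mu$: note that $\Delta^{\lambda^\vee}(w^\dagger w)$ is built from the functions $\Delta_k(w^\dagger w)$, which are $K_n$-invariant, so by itself it generates the trivial $K_n$-type, and the burden falls entirely on the unspecified ``minor factor.'' Moreover the identity $\Delta^{\lambda}(u)\Delta^{\lambda^\vee}(u)=\operatorname{det}(u)^m$ is false as written: comparing exponents of the principal minors $\Delta_k$ shows it holds only for rectangular $\lambda$; the correct statement involves the \emph{opposite} (lower-right) principal minors, via the Faraut--Koranyi relation between $\Delta_k(u)$ and minors of $u^{-1}$. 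So as it stands the proposal is a plausible outline whose decisive steps are either circular (the degree argument) or incorrect in detail (the explicit construction); the reliable completion is the Howe-duality route you mention last.
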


\begin{proof}
This is proved in \cite[Proposition 6.4]{gz-radon-imrn} for $\mathbb{F}%
=\mathbb{R}$ and $\mathbb{H}$ and in \cite[Theorem 5.3]{gkz-JLT} for
$\mathbb{F}=\mathbb{C}$. It also follows from \cite{Kostant-Sahi-inv}.
\end{proof}

\section{The Capelli identity for
Grassmannians\label{section-Capelli identity}}

In this section we find the spectrum of the operator $C_{s,l}$ on
$\mathcal{R}$, the main result being Theorem \ref{Th:main1} below. The key
computation involves the special case $l=1$. Thus we consider the operator%
\[
C_{s}:=C_{s,1}=\Psi^{s+1}L\Psi^{-s}.
\]
and we write $c_{s,\mu}$ for its eigenvalue on $\mathcal{R}_{\mu}$. We first
prove that $c_{s,\mu}$ is a polynomial in $s$ and $\mu$, and we determine its
leading term in $s$. For any $\mu$ we denote its $\rho$-shift by $\bar{\mu
}=\mu+\rho$. Also let $\mathcal{W}_{r}$ denote the Weyl group of type $BC_{r}$
acting on polynomials in $r$ variables by sign changes and permutations.

\begin{lemma}
\label{Lem: p}There exists a polynomial $p(t,z)=p\left(  t,z_{1},\ldots
,z_{r}\right)  $ such that $c_{s,\mu}=p(s,\bar{\mu})$. Moreover $p$ has
$t$-degree $\leq2r$, total $z$-degree $\leq2r$, and is $\mathcal{W}_{r}%
$-invariant in $z$.
\end{lemma}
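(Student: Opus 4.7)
The plan is to write $C_{s}$ as a polynomial in $s$ whose coefficients are $K\times G_{r}$-invariant differential operators of order at most $2r$, and then to apply the Harish-Chandra homomorphism coefficient by coefficient.

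First I would note that $\Psi(w)=\operatorname{det}(w^{\dagger}w)$ has polynomial degree exactly $2r$ in $w$, so $L=\partial(\Psi)$ is a constant-coefficient differential operator of order $2r$. Writing $L=\sum_{|\alpha|\leq 2r}a_{\alpha}\partial^{\alpha}$ and expanding $\Psi^{s+1}\partial^{\alpha}\Psi^{-s}$ via Leibniz, one sees by an easy induction on $|\beta|$ that $\Psi^{s}\partial^{\beta}(\Psi^{-s})$ is a polynomial in $s$ of degree $|\beta|$ with coefficients that are rational expressions in $\Psi$ and its partial derivatives (hence smooth on $X$). Collecting powers of $s$ yields
\[
C_{s}=\sum_{j=0}^{2r}s^{j}D_{j},
\]
where each $D_{j}$ is a differential operator on $X$ of order at most $2r$ with smooth coefficients.

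Next I would argue that each $D_{j}$ is separately $K\times G_{r}$-invariant: the invariance condition is linear in the operator, $C_{s}$ is invariant for every complex $s$, and it depends polynomially on $s$, so all coefficients $D_{j}$ are invariant. Consequently each $D_{j}$ descends to a $K$-invariant differential operator $\bar{D}_{j}\in\mathbf{D}_{K}(Y)$ of order at most $2r$. Applying the Harish-Chandra isomorphism $\eta:\mathbf{D}_{K}(Y)\to\mathcal{P}(\mathfrak{a}^{\ast})^{\mathcal{W}_{r}}$, which respects the order/degree filtration, we obtain $\mathcal{W}_{r}$-invariant polynomials $\eta(\bar{D}_{j})(z)$ in $z=(z_{1},\ldots,z_{r})$ of total degree at most $2r$. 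On the $K$-type $\mathcal{R}_{\mu}\cong V_{\mu}$ the operator $\bar{D}_{j}$ acts by the scalar $\eta(\bar{D}_{j})(\bar{\mu})$, and therefore
\[
c_{s,\mu}=\sum_{j=0}^{2r}s^{j}\,\eta(\bar{D}_{j})(\bar{\mu})=p(s,\bar{\mu}),\qquad p(t,z):=\sum_{j=0}^{2r}t^{j}\,\eta(\bar{D}_{j})(z),
\]
which is a polynomial with $t$-degree $\leq 2r$, total $z$-degree $\leq 2r$, and $\mathcal{W}_{r}$-invariant in $z$.

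The only place that needs real care is the bookkeeping step: one must verify simultaneously that (a) the expansion of $\Psi^{s+1}L\Psi^{-s}$ produces $s$-polynomial coefficients of degree exactly bounded by the order of $L$, and (b) the Harish-Chandra image of an operator of order $2r$ sits in the degree-$2r$ piece of $\mathcal{P}(\mathfrak{a}^{\ast})^{\mathcal{W}_{r}}$. Both facts are standard, but getting the matching bound $2r$ in both variables is what makes the statement tight and hinges on the identity $\deg\Psi=\operatorname{ord}L=2r$.
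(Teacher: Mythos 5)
Your proof is correct, and it reaches the lemma by a slightly different route than the paper. The paper works at the level of scalars: it fixes the spherical vector $\phi\in\mathcal{R}_{\mu}$ normalized at the base point, evaluates $C_{s}\phi$ at $x_{0}$ to see via Leibniz that $c_{s,\mu}$ is a polynomial in $s$ of degree $\leq 2r$, and then inverts a Vandermonde system at finitely many integer values of $s$ to express each $s$-coefficient $\kappa_{i}(\bar{\mu})$ as a fixed rational linear combination of the eigenvalues $c_{j,\mu}$; the Harish-Chandra homomorphism is invoked only for the specialized operators $C_{j}$, which are already known to be invariant. You instead decompose the operator itself, $C_{s}=\sum_{j\leq 2r}s^{j}D_{j}$, observe that each coefficient $D_{j}$ is separately $K\times G_{r}$-invariant (polynomial identity in $s$), descends to an element of $\mathbf{D}_{K}(Y)$ of order $\leq 2r$, and then apply $\eta$ term by term, so that $p(t,z)=\sum_{j}t^{j}\eta(\bar{D}_{j})(z)$ appears directly. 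The trade-off: your argument avoids the evaluation-at-$x_{0}$ and interpolation step and is structurally cleaner, but it requires the extra (easy) verifications that the $s$-coefficients are invariant differential operators with smooth coefficients on $X$ and that they descend to $Y$ — which is covered by the same standard projection argument the paper cites for $C_{s,l}$ itself — together with the fact that the Harish-Chandra image of an order-$m$ invariant operator has degree $\leq m$, a fact the paper's proof also uses implicitly when it asserts $\deg p_{j}\leq 2r$. Both routes rest on exactly the same two pillars (Leibniz gives $s$-degree $\leq 2r$; Harish-Chandra gives $\mathcal{W}_{r}$-invariance and $z$-degree $\leq 2r$), so the difference is one of packaging rather than substance.
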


\begin{proof}
Fix $\phi$ in $\mathcal{H}(\mu)$ such that $\phi(y_{0})=1$. Viewed as a smooth
function on $X$, we have $\phi(x_{0})=1$. Evaluating the eigenvalue equation
$C_{s}\phi=c_{s,\mu}\phi$ at $x_{0}$ leads to
\[
c_{s,\mu}=C_{s}\phi(x_{0})=\Psi^{1-s}L(\Psi^{s}\phi)(x_{0})=L(\Psi^{s}%
\phi)(x_{0}).
\]
Since $L$ is a polynomial differential operator of degree $2r$ and $\Psi^{s}$
is a power of the polynomial function $\Psi$, we see by Leibniz rule that
$c_{s,\mu}$ is a polynomial of degree $\leq2r$ in $s$. The coefficient
$\kappa_{j}$ of $s^{j}$ depends only on $\mu$, or equivalently on $\bar{\mu}$,
and so we can write%
\begin{equation}
c_{s,\mu}=\sum\nolimits_{i\leq2r}\kappa_{i}\left(  \bar{\mu}\right)  s^{i}
\label{=csmu}%
\end{equation}
Consider the equations (\ref{=csmu}) for $2r$ distinct values of $s,$ say
$s=1,\ldots,2r$. By non-vanishing of the Vandermonde determinant, we can
invert this system of equations to obtain $\gamma_{ij}\in\mathbb{Q}$ such
that
\[
\kappa_{i}\left(  \bar{\mu}\right)  =\sum\nolimits_{j\leq2r}\gamma
_{ij}c_{j,\mu}\text{ }%
\]
for all $i$ and all $\mu$. By the Harish-Chandra homomorphism we can write
$c_{j,\mu}=p_{j}(\bar{\mu})$, where $p_{j}\left(  z\right)  $ is a
$\mathcal{W}_{r}$-invariant polynomial of total degree $\leq2r$. Then the
polynomials
\[
k_{i}\left(  z\right)  =\sum\nolimits_{j\leq2r}\gamma_{ij}p_{j}\left(
z\right)
\]
satisfy the same properties, and $p\left(  t,z\right)  =\sum\nolimits_{i\leq
2r}k_{i}\left(  z\right)  t^{i}$ satisfies the requirements of the lemma.
\end{proof}

\begin{lemma}
\label{Lem: lead} The leading term of $p\left(  t,z\right)  $ as a polynomial
in $t$ is $2^{2r}t^{2r}$.
\end{lemma}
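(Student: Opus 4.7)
The plan is to isolate the $s^{2r}$-coefficient in the formula $c_{s,\mu} = L(\Psi^s \phi)(x_0)$ established in the proof of Lemma~\ref{Lem: p}, and show that it equals $2^{2r}$ independent of $\mu$. Since $\phi \in \mathcal{R}_\mu$ does not depend on $s$, an application of the Leibniz rule to the constant-coefficient operator $L$ of order exactly $2r$ shows that the $s^{2r}$ contribution forces every one of the $2r$ derivatives in $L$ to land on $\Psi^s$: any derivative falling on $\phi$ costs a power of $s$. Thus the $s^{2r}$-coefficient of $c_{s,\mu}$ equals the $s^{2r}$-coefficient of $L(\Psi^s)(x_0) \cdot \phi(x_0) = L(\Psi^s)(x_0)$, and is in particular independent of $\mu$, so the leading $t$-coefficient of $p(t,z)$ is a constant.

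To compute this constant I would apply the multivariate Fa\`a di Bruno formula. Fixing orthonormal real coordinates $w_1,\dots,w_N$ on $W$ (regarded as a real Euclidean space via $\langle w_1,w_2\rangle = \operatorname{Re}\operatorname{tr}(w_1^\dagger w_2)$), write $\Psi = \sum_\alpha c_\alpha w^\alpha$ and $L = \partial(\Psi) = \sum_\alpha c_\alpha \partial^\alpha$. A short induction on $|\alpha|$ yields
\[
\partial^\alpha(\Psi^s) = s^{|\alpha|}\, \Psi^{s - |\alpha|} \prod_i (\partial_i \Psi)^{\alpha_i} + O(s^{|\alpha|-1}),
\]
and summing over $|\alpha|=2r$ with weights $c_\alpha$, using the identity $\sum_\alpha c_\alpha \prod_i v_i^{\alpha_i} = \Psi(v)$, gives
\[
L(\Psi^s) = s^{2r}\, \Psi^{s - 2r}\, \Psi(\nabla \Psi) + O(s^{2r-1}),
\]
where $\nabla\Psi = (\partial_1 \Psi, \dots, \partial_N \Psi)$ and $\Psi(\nabla\Psi)$ denotes the polynomial $\Psi$ evaluated on this vector field.

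The last step is to evaluate at $x_0$. From $x_0^\dagger x_0 = I_r$ one has $\Psi(x_0) = 1$, and Jacobi's formula for $d\det$ together with equation~(\ref{eq:def-tr}) gives
\[
d\Psi|_{x_0}(v) = \operatorname{tr}(v^\dagger x_0 + x_0^\dagger v) = 2\operatorname{Re}\operatorname{tr}(x_0^\dagger v) = \langle 2 x_0, v\rangle,
\]
so $\nabla\Psi(x_0) = 2 x_0$. Since $\Psi$ is homogeneous of degree $2r$, one concludes
\[
\Psi(\nabla\Psi(x_0)) = \Psi(2 x_0) = 2^{2r} \Psi(x_0) = 2^{2r},
\]
which is the desired leading coefficient.

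The main point requiring care is the bookkeeping of the normalization of $\partial\colon \mathcal{P}(W) \to \mathcal{D}(W)$ with respect to the pairing, particularly when $\mathbb{F} = \mathbb{C}, \mathbb{H}$, where $W$ must be treated as a real Euclidean space with inner product $\operatorname{Re}\operatorname{tr}(w_1^\dagger w_2)$. Once that choice is fixed consistently the argument is uniform in $\mathbb{F}$, and the identity $\Psi(\nabla\Psi(x_0)) = 2^{2r}$ reduces to homogeneity of $\Psi$.
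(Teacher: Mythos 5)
Your proof is correct and takes essentially the same route as the paper: the paper likewise computes the gradient $\nabla\Psi(x_{0})=2x_{0}$ via the chain rule and (\ref{eq:def-tr}), and then simply cites \cite[Lemma 6.4]{sahi-2013-cap} for the fact that the leading $s$-coefficient of $L(\Psi^{s}\phi)(x_{0})$ is $\Psi\left(  \nabla\Psi(x_{0})\right)  $ --- precisely the Leibniz/Fa\`a di Bruno computation you carry out explicitly. Your normalization $\left\langle w_{1},w_{2}\right\rangle =\operatorname{Re}\operatorname{tr}(w_{1}^{\dagger}w_{2})$ agrees with the one the paper actually uses in this proof (where the gradient identity is written as $2(x_{0},w)$), so the constant $2^{2r}$ comes out correctly.
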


\begin{proof}
First, it follows by chain rule and (\ref{eq:def-tr}) that
\[
(D_{w}\operatorname{det}(x^{{\dagger}}x))(x_{0})=\operatorname{tr}%
(x_{0}^{{\dagger}}w+w^{{\dagger}}x_{0})=2(x_{0},w).
\]
The result now follows from \cite[Lemma 6.4]{sahi-2013-cap}.
\end{proof}

We now prove a simple vanishing condition for the eigenvalues.

\begin{lemma}
\label{vanishing} If $\mu\in\Lambda$ and $m=\mu_{1}/2$ then $c_{-m,\mu}=0$.
\end{lemma}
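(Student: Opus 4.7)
The plan is to prove the stronger statement that $C_{-m}$ annihilates $\mathcal{R}_\mu$ identically, which immediately yields $c_{-m,\mu}=0$ since $C_{-m}$ acts on $\mathcal{R}_\mu$ as the scalar $c_{-m,\mu}$. Writing out $C_{-m}\phi=\Psi^{1-m}L(\Psi^{m}\phi)$, it suffices to show that $L(\Psi^{m}\phi)=0$ for every $\phi\in\mathcal{R}_\mu$.

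The first task is to identify the subspace $\Psi^{m}\mathcal{R}_\mu$. Multiplication by $\Psi^{m}$ is a $K$-equivariant injection of $\mathcal{R}_\mu\approx V_\mu$ into $C^\infty(X)^{G_r,m}$. Conversely, since $\Psi$ is strictly positive on $X$, division by $\Psi^{m}$ embeds $\mathcal{P}_\mu^{m}\approx V_\mu$ back into $\mathcal{R}$. Both are isotypic copies of the same irreducible $V_\mu$, and Lemma~\ref{Lem:Rm} asserts that the $V_\mu$-isotypic component of $\mathcal{R}$ has multiplicity one. Hence the two copies coincide and $\Psi^{m}\mathcal{R}_\mu=\mathcal{P}_\mu^{m}$; note $\mu\in\Lambda^{m}$ since $\mu_1=2m$.

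The main step is to argue that $L$ kills $\mathcal{P}_\mu^{m}$ when $\mu_1=2m$, which I would carry out by a $K$-type count. Since $C_s=\Psi^{s+1}L\Psi^{-s}$ preserves every equivariant space $C^\infty(X)^{G_r,t}$, the intertwiner $L$ must shift the $G_r$-weight by exactly $-1$. Combined with $L$ being a constant-coefficient differential operator (hence preserving polynomiality), this yields a $K$-equivariant map
\[
L:\mathcal{P}^{m}\longrightarrow\mathcal{P}^{m-1}.
\]
By Lemma~\ref{Lem: Rmm}, $\mathcal{P}^{m-1}$ is a multiplicity-free sum of $V_\nu$'s with $\nu_1\le 2(m-1)$; in particular it contains no copy of $V_\mu$ for $\mu_1=2m$. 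Schur's lemma then forces $L|_{\mathcal{P}_\mu^{m}}=0$.

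Combining the two steps, $L(\Psi^{m}\phi)=0$ for every $\phi\in\mathcal{R}_\mu$, whence $C_{-m}\phi=0$ and $c_{-m,\mu}=0$. The only minor subtlety is the $G_r$-weight shift of $L$ by $-1$, which I would read off from the already-stated preservation of $C^\infty(X)^{G_r,t}$ by $C_{s,l}$ (or verify directly from the fact that $\Psi$ has $G_r$-weight $\nu$ and that $\partial$ is the formal adjoint of multiplication under the $K$-invariant pairing). Everything else follows formally from the multiplicity-free decompositions of Lemmas~\ref{Lem:Rm} and~\ref{Lem: Rmm}.
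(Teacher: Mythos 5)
Your argument is correct and is essentially the paper's own proof run in the opposite direction: the paper uses the isomorphism $J_{m}:f\mapsto\Psi^{-m}f$ from $\mathcal{P}_{\mu}^{m}$ onto $\mathcal{R}_{\mu}$ and the identity $C_{-m}J_{m}=\Psi^{-m+1}L$, while you multiply by $\Psi^{m}$ and invoke multiplicity-freeness to identify $\Psi^{m}\mathcal{R}_{\mu}=\mathcal{P}_{\mu}^{m}$, then kill it with $L:\mathcal{P}^{m}\to\mathcal{P}^{m-1}$ and Lemma \ref{Lem: Rmm} exactly as the paper does. The extra details you supply (the weight shift of $L$ by $-1$ and the Schur-lemma step) are correct and merely make explicit what the paper leaves implicit.
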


\begin{proof}
The operator $J_{m}:f\mapsto\Psi^{-m}f$ maps $\mathcal{P}_{\mu}^{m}$
isomorphically onto $\mathcal{R}_{\mu}$. Thus it suffices to prove that
$C_{-m}J_{m}$ acts by $0$ on $\mathcal{P}_{\mu}^{m}$. To see this we note that%
\[
C_{-m}J_{m}=\left(  \Psi^{-m+1}L\Psi^{m}\right)  \Psi^{-m}=\Psi^{-m+1}L.
\]
Now $L$ maps $\mathcal{P}_{\mu}^{m}$ to $\mathcal{P}_{\mu}^{m-1}$, which is
$0$ by Lemma \ref{Lem: Rmm}.
\end{proof}

We deduce a divisibility property of $p\left(  t,z\right)  $.

\begin{lemma}
The polynomial $p\left(  t,z\right)  $ is divisible by $2t-\rho_{1}+z_{1}.$
\end{lemma}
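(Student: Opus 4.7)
The plan is to deduce divisibility by showing that $p(t,z)$ vanishes identically on the hyperplane $H = \{(t,z) : 2t - \rho_1 + z_1 = 0\}$ and then invoking polynomial division. The starting point is Lemma \ref{vanishing}: for any $\mu \in \Lambda$ with $\mu_1 = 2m$, we have $p(-m, \bar{\mu}) = c_{-m, \mu} = 0$. Since $\bar{\mu}_1 = \mu_1 + \rho_1 = 2m + \rho_1$, the point $(t,z) = (-m, \bar{\mu})$ satisfies $2t - \rho_1 + z_1 = -2m - \rho_1 + (2m+\rho_1) = 0$, so every such vanishing point lies on $H$.

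Next, I would perform polynomial division in $z_1$, writing
\[
p(t, z_1, z_2, \ldots, z_r) = \bigl(z_1 - (\rho_1 - 2t)\bigr)\, Q(t, z) + R(t, z_2, \ldots, z_r),
\]
where the remainder $R$ is independent of $z_1$. Note that $p|_H = R$, so divisibility of $p$ by $2t - \rho_1 + z_1$ is equivalent to $R \equiv 0$. From the previous step, $R(-m, \bar{\mu}_2, \ldots, \bar{\mu}_r) = 0$ for every integer $m \ge 0$ and every non-increasing tuple of even non-negative integers $\mu_2 \geq \mu_3 \geq \cdots \geq \mu_r \geq 0$ with $\mu_2 \leq 2m$.

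The final step is a Zariski density argument. For each fixed $m$, the polynomial $R(-m, z_2, \ldots, z_r)$ has total degree at most $2r$ (inherited from $p$ via Lemma \ref{Lem: p}), and by the above it vanishes on the image of a large ``partition simplex'' under the affine map $\mu_i \mapsto \mu_i + \rho_i$. A short induction on $r$ shows that a polynomial of total degree $\leq D$ in $r-1$ variables that vanishes on all integer partitions $(k_2 \geq \cdots \geq k_r \geq 0)$ with $k_2 \leq M$ must be identically zero as soon as $M \geq D(r-1)$: the base case $r=2$ is the standard fact that a one-variable polynomial of degree $\leq D$ with $D+1$ roots vanishes, and the induction step fixes the largest variable at a value $\geq T(D,r-1)$ and applies the inductive hypothesis to the restriction. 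Hence for all sufficiently large $m$ one has $R(-m, z_2, \ldots, z_r) \equiv 0$, and since $R$ has finite degree in $t$ and vanishes at infinitely many $t = -m$, we conclude $R \equiv 0$.

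The main obstacle is making the density argument precise, since the vanishing set is an ordered partition simplex rather than a rectangular grid; but the simple induction above circumvents this cleanly. It is worth noting that the $\mathcal{W}_r$-invariance of $p$ in $z$ is not needed at this step — only the total degree bound and the cardinality of the partition points enter the argument — though it will be exploited later to package together the analogous divisibility statements obtained by applying the Weyl group to the factor $2t - \rho_1 + z_1$.
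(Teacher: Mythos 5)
Your proposal is correct and follows essentially the same route as the paper: both reduce divisibility to showing that $p$ vanishes on the hyperplane $2t-\rho_{1}+z_{1}=0$, observe via Lemma \ref{vanishing} that the points $\left(-\mu_{1}/2,\mu+\rho\right)$ for $\mu\in\Lambda$ lie on this hyperplane and are zeros of $p$, and conclude by a density argument. The only difference is bookkeeping: the paper parametrizes the hyperplane by $z$ (setting $t=(\rho_{1}-z_{1})/2$) and invokes Zariski density of $\Lambda+\rho$ in one line, whereas you parametrize by $(t,z_{2},\ldots,z_{r})$ and verify density by an explicit stratified induction; note only that your partition points are \emph{even} partitions, so the threshold $M\geq D(r-1)$ needs a trivial rescaling, which is harmless since you only need $m$ sufficiently large.
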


\begin{proof}
It is enough to show that $p\left(  t,z\right)  $ vanishes on the hyperplane%
\[
2t-\rho_{1}+z_{1}=0,
\]
i.e. that $g\left(  z\right)  =p\left(  \frac{\rho_{1}-z_{1}}{2},z\right)  $
is identically $0$. For $z=\bar{\mu}$ by the previous lemma we get
\[
g\left(  \bar{\mu}\right)  =p\left(  -\mu_{1}/2,\bar{\mu}\right)  =c_{-\mu
_{1}/2,\mu}=0
\]
Since the set $\Lambda+\rho$ is Zariski dense, the polynomial $g$ must be
identically $0.$
\end{proof}

We now prove an explicit formula for $p\left(  t,z\right)  $. Define
\begin{equation}
q(t,z)=\prod\nolimits_{j=1}^{r}\left[  t^{2}-z_{j}^{2}\right]  . \label{def-q}%
\end{equation}

\begin{theo+}
\label{Th:Cs} We have $p\left(  t,z\right)  =q\left(  2t-\rho_{1},z\right)  $.
\end{theo+}

\begin{proof}
By the previous lemma $p\left(  t,z\right)  $ is divisible by $\left(
2t-\rho_{1}\right)  +z_{1}$, and hence by $\mathcal{W}_{r}$-invariance it is
divisible by all factors of the form $\left(  2t-\rho_{1}\right)  \pm z_{j}$.
In other words $p\left(  t,z\right)  $ is divisible by $q\left(  2t-\rho
_{1},z\right)  $. However by (\ref{def-q}) and Lemma \ref{Lem: lead} both
polynomials have the same leading $t$-coefficient $2^{2r}t^{2r}$. Therefore
they must be equal.
\end{proof}

For each positive integer $l$ let $q_{l}\left(  t,z\right)  $ denote the
polynomial
\[
q_{l}\left(  t,z\right)  =\prod\nolimits_{i=0}^{l-1}q\left(  t+2i,z\right)
=\prod\nolimits_{i=0}^{l-1}\prod\nolimits_{j=1}^{r}\left[  \left(
t+2i\right)  ^{2}-z_{j}^{2}\right]  .
\]
The main result of this section is the following.

\begin{theorem}
\label{Th:main1} The eigenvalue of $C_{s,l}$ on $\mathcal{R}_{\mu}$ is
$q_{l}\left(  2s-\rho_{1},\bar{\mu}\right)  .$
\end{theorem}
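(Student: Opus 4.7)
The plan is to reduce the general $l$ statement to the $l=1$ case already established in Theorem~\ref{Th:Cs}, via a telescoping factorization of $C_{s,l}$ as a product of operators of the form $C_{s'}$.

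First I would verify the identity
\[
C_{s,l} \;=\; C_{s+l-1}\,C_{s+l-2}\cdots C_{s+1}\,C_{s}.
\]
This is a routine check using the definition $C_{s'} = \Psi^{s'+1} L\, \Psi^{-s'}$: in any product $C_{s'+1}\,C_{s'} = \Psi^{s'+2} L\, \Psi^{-(s'+1)}\Psi^{s'+1} L\, \Psi^{-s'}$ the middle powers of $\Psi$ cancel, yielding $\Psi^{s'+2} L^{2} \Psi^{-s'}$. Iterating this cancellation gives $\Psi^{s+l} L^{l} \Psi^{-s} = C_{s,l}$.

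Next I would observe that each factor $C_{s+i}$ is a $K$-invariant differential operator on $Y$, so it preserves the $K$-isotypic decomposition $\mathcal{R} = \bigoplus_{\mu\in\Lambda}\mathcal{R}_\mu$ of Lemma~\ref{Lem:Rm}. Since each $\mathcal{R}_\mu\simeq V_\mu$ is $K$-irreducible and occurs with multiplicity one, Schur's lemma forces $C_{s+i}$ to act on $\mathcal{R}_\mu$ by a scalar, which by Theorem~\ref{Th:Cs} is $p(s+i,\bar{\mu}) = q(2(s+i)-\rho_1,\bar{\mu})$.

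Combining these two observations, the eigenvalue of $C_{s,l}$ on $\mathcal{R}_\mu$ is the product
\[
\prod_{i=0}^{l-1} q\bigl(2(s+i)-\rho_1,\bar{\mu}\bigr)
\;=\;\prod_{i=0}^{l-1}\prod_{j=1}^{r}\!\Bigl[(2s-\rho_1+2i)^2-\bar{\mu}_j^{2}\Bigr]
\;=\;q_l\bigl(2s-\rho_1,\bar{\mu}\bigr),
\]
which is exactly the claimed formula. There is no serious obstacle in this argument: the only point requiring any care is justifying that each $C_{s+i}$ is well-defined on the common invariant subspace $\mathcal{R}_\mu$ (so that the product of eigenvalues equals the eigenvalue of the product), but this is immediate from the $K$-equivariance established in Section~\ref{section-Matrix space} together with the multiplicity-one decomposition of Lemma~\ref{Lem:Rm}.
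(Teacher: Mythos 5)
Your proposal is correct and follows essentially the same route as the paper: factorize $C_{s,l}=\prod_{i=0}^{l-1}C_{s+i}$ by telescoping the powers of $\Psi$, then multiply the $l=1$ eigenvalues $q(2(s+i)-\rho_{1},\bar{\mu})$ supplied by Theorem~\ref{Th:Cs} to obtain $q_{l}(2s-\rho_{1},\bar{\mu})$. The only difference is that you spell out the Schur's lemma/multiplicity-one justification that each factor acts by a scalar on $\mathcal{R}_{\mu}$, which the paper leaves implicit.
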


\begin{proof}
We can factorize $C_{s,l}=\Psi^{s+l}L^{l}\Psi^{-s}$ iteratively as follows:%
\[
C_{s,l}=\left(  \Psi^{s+l}L\Psi^{-s-l+1}\right)  \left(  \Psi^{s+l-1}%
L^{l-1}\Psi^{-s}\right)  =C_{s+l-1}C_{s,l-1}=\prod\nolimits_{i=0}^{l-1}%
C_{s+i}\text{.}%
\]
Therefore by Theorem \ref{Th:Cs} the eigenvalue of $C_{s,l}$ on $\mathcal{R}%
_{\mu}$ is%
\[
\prod\nolimits_{i=0}^{l-1}p\left(  s+i,\bar{\mu}\right)  =\prod\nolimits_{i=0}%
^{l-1}q\left(  2s+2i-\rho_{1},\bar{\mu}\right)  =q_{l}\left(  2s-\rho_{1}%
,\bar{\mu}\right)  \text{.}%
\]

\end{proof}

\section{Non-compact Grassmannians\label{section-noncompact}}

In this section we explain how to extend the previous results to the
non-compact duals of the Grassmannians. Let $J$ be the Hermitian form of
signature $\left(  r,n-r\right)  $ on $\mathbb{F}^{n}$ given by
\[
(v,w)_{J}=v^{\dag}Jw{,\quad}J=\operatorname{diag}(I_{n-r},-I_{r})\text{,}%
\]
and let $\widetilde{K}$ be its isometry group. Thus for $\mathbb{F}%
=\mathbb{R},\mathbb{C},\mathbb{H}$ we have, respectively,
\[
\widetilde{K}=O\left(  r,n-r\right)  ,\,U\left(  r,n-r\right)  ,\,Sp\left(
r,n-r\right)  \text{.}%
\]
Let $M=K_{r}\times K_{n-r}$ be as before; then $M$ is a symmetric subgroup of
$\widetilde{K}$ and the symmetric space $\widetilde{Y}=\widetilde{K}/M$ is the
non-compact dual of the Grassmannian $Y=K/M$.

Let $X$ be the space of $n\times r$ matrices of rank $r$ as before, and let
$\widetilde{X}$ be the open subset
\begin{equation}
\widetilde{X}=\left\{  z\in X:(\cdot,\cdot)_{J}\text{ is positive definite
on}\operatorname{col}\left(  z\right)  \right\}  . \label{z-def}%
\end{equation}
Assume $n-r\geq r$ as before, and consider the following differential operator
on $\widetilde{X}$,
\[
\tilde{C}_{s,l}=\widetilde{\Psi}^{l+s}\widetilde{L}^{l}\Psi^{-s}%
,\quad\widetilde{\Psi}=\operatorname{det}(z^{{\dagger}}Jz),\widetilde{L}%
=\partial(\widetilde{\Psi}).
\]
Now $\widetilde{Y}$ can be realized as the quotient $\widetilde{X}/G_{r}$,
analogous to the realization $X/G_{r}$ of $Y$ (\ref{=col})-(\ref{=XGr}). Thus
we can identify $C^{\infty}(\widetilde{Y})$ with the space $C^{\infty
}(\widetilde{X})^{G_{r}}$ of right $G_{r}$-invariant functions on
$\widetilde{X}$ and $\tilde{C}_{s,l}$ then descends to a $\widetilde{K}%
$-invariant differential operator on $\widetilde{Y}$. We explain how to
compute its spectrum.

The complexifications of $\widetilde{K}$ and $K$ are conjugate inside
$G_{\mathbb{C}}=GL_{n}\left(  \mathbb{F}\right)  _{\mathbb{C}}$; we have
\[
ad\left(  \sigma\right)  :K_{{\mathbb{C}}}\approx\widetilde{K}_{{\mathbb{C}}%
},\quad\sigma=diag(I_{r},\sqrt{-1}I_{n-r}),
\]
which follows by noting that $\sigma=\sigma^{\dag}$ and $\sigma^{\dagger
}J\sigma=I$. Moreover $ad\left(  \sigma\right)  $ fixes $M$, thus allowing us
to transfer the structure theoretic results (\ref{=Cartan}), (\ref{rho})
\textit{etc.} from $\mathfrak{k}$ to $\widetilde{\mathfrak{k}}$. In particular
$\widetilde{\mathfrak{a}}=ad\left(  \sigma\right)  \mathfrak{a}$ is a Cartan
subspace for $\widetilde{K}/M$, and $\widetilde{\mathfrak{a}}^{\ast}$ is the
space of Satake parameters for $M$-spherical functions on $\widetilde{K}$. For
$\lambda\in\mathfrak{a}^{\ast}$ we define $\tilde{\lambda}\in
\widetilde{\mathfrak{a}}^{\ast}$ by
\begin{equation}
\tilde{\lambda}=\left(  \lambda+\rho\right)  \circ ad\left(  \sigma
^{-1}\right)  \label{=lam-til}%
\end{equation}
and let $\phi_{\tilde{\lambda}}\in C^{\infty}(\widetilde{Y})$ be the
corresponding spherical function. Also as before put%
\[
q_{l}\left(  t,z\right)  =\prod\nolimits_{i=0}^{l-1}\prod\nolimits_{j=1}%
^{r}\left[  \left(  t+2i\right)  ^{2}-z_{j}^{2}\right]
\]

\begin{theorem}
\label{thm-ncpt} For all $\lambda\in\mathfrak{a}^{\ast}$ we have $\tilde
{C}_{s,l}\left(  \phi_{\tilde{\lambda}}\right)  =q_{l}(2s-\rho_{1}%
,\lambda+\rho)\phi_{\tilde{\lambda}}$.
\end{theorem}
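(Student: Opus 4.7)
The plan is to transfer the eigenvalue formula of Theorem \ref{Th:main1} from the compact Grassmannian $Y=K/M$ to its non-compact dual $\tilde Y=\tilde K/M$, using the duality provided by the element $\sigma$. The key point is that the two invariant differential operators $C_{s,l}$ and $\tilde C_{s,l}$ arise from the same algebraic recipe applied to the two real forms, and they correspond under the natural isomorphism of their algebras of invariant differential operators induced by $\ad(\sigma)$. Since the Harish-Chandra isomorphisms are compatible with this correspondence, up precisely to the twist built into the definition (\ref{=lam-til}) of $\tilde\lambda$, the formula of Theorem \ref{Th:main1} will yield the desired result.

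First I would verify the operator correspondence. Using the identity $\sigma^{\dagger}J\sigma = I$, one sees that $\tilde\Psi(\sigma w)=\operatorname{det}((\sigma w)^{\dagger}J(\sigma w))=\operatorname{det}(w^{\dagger}w)=\Psi(w)$, so $\tilde\Psi$ pulls back to $\Psi$ under the complex linear substitution $z=\sigma w$. A chain rule computation then shows that the constant-coefficient operators $L=\partial(\Psi)$ and $\tilde L=\partial(\tilde\Psi)$ are intertwined by this substitution, and consequently $\tilde C_{s,l}=\tilde\Psi^{s+l}\tilde L^{l}\tilde\Psi^{-s}$ and $C_{s,l}=\Psi^{s+l}L^{l}\Psi^{-s}$ represent the same element in the common complexified algebra of $M_\mathbb{C}$-invariant differential operators on $W_\mathbb{C}$, descending to the common algebra of invariant differential operators on $Y_\mathbb{C}\approx\tilde Y_\mathbb{C}$.

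With this established, the proof concludes by tracking Harish-Chandra images. By Theorem \ref{Th:main1}, the Harish-Chandra image of $C_{s,l}$ as a $\mathcal{W}_r$-invariant polynomial on $\mathfrak{a}^{\ast}$ is the polynomial $z\mapsto q_{l}(2s-\rho_{1},z)$. The compatibility of the Harish-Chandra homomorphisms under $\ad(\sigma)$ implies that the Harish-Chandra image of $\tilde C_{s,l}$ on $\tilde{\mathfrak{a}}^{\ast}$ is the pullback of this polynomial along $\ad(\sigma^{-1})^{\ast}$. Evaluating at $\tilde\lambda=(\lambda+\rho)\circ\ad(\sigma^{-1})$ as in (\ref{=lam-til}) yields the scalar $q_{l}(2s-\rho_{1},\lambda+\rho)$, by which $\tilde C_{s,l}$ acts on the spherical function $\phi_{\tilde\lambda}$.

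The main obstacle will be the bookkeeping for the operator correspondence and the justification that the two Harish-Chandra homomorphisms align correctly under $\ad(\sigma)$, including the proper role of $\rho$. A concrete alternative that bypasses the abstract duality argument is the following: realize $\tilde Y$ as an open domain $D\subset Y$ and invoke the classical observation of Helgason \cite{He2,He3} that for $\mu\in\Lambda$ the spherical polynomials in $\mathcal{R}_{\mu}$ restrict on $D$ to the Harish-Chandra spherical functions $\phi_{\tilde\mu}$. The eigenvalue equation on $\mathcal{R}_{\mu}$ then transfers directly, giving the formula for the Zariski-dense set $\lambda=\mu\in\Lambda$; since both sides are polynomial in $s$ and meromorphic in $\lambda$, the identity extends to all $\lambda\in\mathfrak{a}^{\ast}$.
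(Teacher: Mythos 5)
Your proposal is correct and follows essentially the same route as the paper: the paper also reduces to $\lambda=\mu\in\Lambda$ via the Harish-Chandra homomorphism and Zariski density of $\Lambda+\rho$, and then transfers Theorem \ref{Th:main1} through the element $\sigma$ (using $\sigma^{\dagger}J\sigma=I$, so that $\sigma$ carries $\Psi,C_{s,l}$ to $\widetilde{\Psi},\tilde{C}_{s,l}$), the only difference being that the paper implements the duality concretely by extending the $K$-finite rational functions in $\mathcal{R}_{\mu}$ meromorphically to $W_{\mathbb{C}}$ and mapping them by $\sigma$ onto $\widetilde{K}$-modules containing $\phi_{\tilde{\mu}}$, rather than invoking abstract compatibility of the two Harish-Chandra homomorphisms. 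Your ``concrete alternative'' (spherical polynomials restricting to Harish-Chandra functions on the open domain) is precisely the idea the paper states in the introduction and realizes in its proof.
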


\begin{proof}
By the Harish-Chandra homomorphism there exists a $\mathcal{W}_{r}$-invariant
polynomial $p$ on $\mathfrak{a}^{\ast}$ such that we have $\tilde{C}%
_{s,l}\left(  \phi_{\tilde{\lambda}}\right)  =p\left(  \lambda+\rho\right)
\phi_{\tilde{\lambda}}$ for all $\lambda\in\mathfrak{a}^{\ast}$. By Zariski
density $p$ is uniquely determined by its values on the set $\Lambda+\rho$,
and thus it suffices to prove the theorem for $\mu\in\Lambda$.

Let $\mathcal{R}$ be the algebra of $K$-finite functions in $C^{\infty}\left(
X\right)  ^{G_{r}}$ and let $\widetilde{\mathcal{R}}$ be the algebra of
$\widetilde{K}$-finite functions in $C^{\infty}\left(  \widetilde{X}\right)
^{G_{r}}$. Also let $\mathcal{Q}$, $\widetilde{\mathcal{Q}}$ be the algebras
of $G_{r}$-invariant meromorphic rational functions on the complexification
$W_{{\mathbb{C}}}=W\otimes_{\mathbb{R}}{\mathbb{C}}$, which are
$K_{{\mathbb{C}}}$-finite and $\widetilde{K}_{{\mathbb{C}}}$-finite
respectively. By Lemmas \ref{Lem:Rm} and \ref{Lem: Rmm} each element of
$\mathcal{R}$ is a rational function on $W$, and hence extends uniquely to a
meromorphic rational function on $W_{{\mathbb{C}}}$. This gives us the first
map in the diagram below%
\[
\mathcal{R\longrightarrow Q}\overset{\sigma}{\longrightarrow}%
\widetilde{\mathcal{Q}}\longrightarrow\widetilde{\mathcal{R}},
\]
the middle map is obtained by the natural holomorphic action of $\sigma\in
G_{\mathbb{C}}$ on $W_{{\mathbb{C}}}$, while the last is the restriction map
to $\widetilde{X}$. All maps are injective and applying them to the
irreducible $K$-submodule $\mathcal{R}_{\mu}\subset\mathcal{R}$ as in Lemma
\ref{Lem:Rm}, we obtain an irreducible finite-dimensional $\widetilde{K}%
$-submodule $\widetilde{\mathcal{R}}_{\mu}$ whose $M$-fixed vector can be
identified with $\phi_{\tilde{\mu}}.$

The polynomials $\Psi$ and $\widetilde{\Psi}$ extend to holomorphic
polynomials on $W_{{\mathbb{C}}}$, while $C_{s,l}$ and $\tilde{C}_{s,l}$
extend to holomorphic differential operators on $W_{{\mathbb{C}}}.$ The action
of $\sigma$ on $W_{{\mathbb{C}}}$ carries $\Psi$ to $\widetilde{\Psi}$ and
$C_{s,l}$ to $\tilde{C}_{s,l}$. It follows that $\sigma$ intertwines the
action of $C_{s,l}$ on $\mathcal{R}_{\mu}$ with the action of $\tilde{C}%
_{s,l}$ on $\widetilde{\mathcal{R}}_{\mu}.$ Thus by Theorem \ref{Th:main1}
$\tilde{C}_{s,l}$ acts on $\widetilde{\mathcal{R}}_{\mu}$, and in particular
on $\phi_{\tilde{\mu}},$ by the scalar $q_{l}(2s-\rho_{1},\mu+\rho)$.
\end{proof}

\section{Nonspherical cases\label{section-non-spherical}}

We now extend the previous results to non-spherical line bundles over the
Grassmannian $Y=Gr_{n,r}(\mathbb{F})$. For $\mathbb{F}=\mathbb{R}$ the
nonspherical bundles are obtained by twisting by the sign
character\footnote{For $r=2$ the Grassmannian $Gr_{n,2}(\mathbb{R})$ does have
a complex structure and corresponding holomorphic line bundles, but this case
does not seem to fit into our framework and we shall not treat it here.}, and
have already been treated in \cite{sahi-2013-cap}. For $\mathbb{F}=\mathbb{H}$
there are no such bundles, therefore it remains only to consider
$\mathbb{F}=\mathbb{C}$.

The Grassmannian for $\mathbb{F}=\mathbb{C}$ is the compact Hermitian
symmetric space
\[
Y=Gr_{n,r}(\mathbb{C})=X/G_{r}=K/M=U(n)/U(r)\times U(n-r).
\]
For each integer $p$, the character $g\mapsto\operatorname{det}\left(
g\right)  ^{p}$ of $G_{r}=GL_{r}(\mathbb{C})$ induces a holomorphic line
bundle on $Y=X/G_{r}$, whose space of \emph{smooth} sections is given as
follows
\[
C^{\infty}(Y,p)=\left\{  f\in C^{\infty}\left(  X\right)
:f(xg)=\operatorname{det}(g)^{p}f(x)\text{ for all }g\in G_{r}\right\}  .
\]
As a $K$-module we have%
\[
C^{\infty}(Y,p)=\left\{  f\in C^{\infty}\left(  K\right)  :f(km)=\chi
_{p}\left(  m\right)  f(k)\text{ for all }m\in M\right\}
\]
where $\chi_{p}$ is the character of $M=U(r)\times U(n-r)$ given by $\chi
_{p}\left(  m_{1},m_{2}\right)  =\operatorname{det}\left(  m_{1}\right)  ^{p}$.

The irreducible $K$-submodules of $C^{\infty}(Y,p)$ occur with multiplicity
$1$ and parametrized explicitly in \cite{schlicht}. For $\Lambda,\Lambda^{m}$
as in (\ref{=Lam}, \ref{=Lam-m}) and for an integer $b\geq0$ we define
\begin{equation}
\Lambda_{b}=\{\lambda+\left(  b,\ldots,b\right)  \mid\lambda\in\Lambda
\},\quad\Lambda_{b}^{m}=\{\lambda+\left(  b,\ldots,b\right)  \mid\lambda
\in\Lambda^{m}\}, \label{=Lam-mp}%
\end{equation}
regard as subsets of $\mathfrak{a}^{\ast}$ as before. Then one has the
following result \cite[Th 7.2]{schlicht}.

\begin{lemma}
\label{schli} For each $\mu$ in $\Lambda_{\left\vert p\right\vert }$ there is
a unique $K$-type $V_{\mu}$ in $C^{\infty}(Y,p)$ whose highest weight
restricts to $\mu$ on $\mathfrak{a}$.
\end{lemma}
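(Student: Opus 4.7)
The plan is to prove Lemma \ref{schli} via Frobenius reciprocity together with a line bundle version of the Cartan--Helgason theorem, i.e., an analysis of the $M$-branching of irreducible $K$-types. Since $C^\infty(Y,p) = \operatorname{Ind}_M^K(\chi_p)$, Frobenius reciprocity identifies the multiplicity of an irreducible $K$-module $V_\lambda$ in $C^\infty(Y,p)$ with the dimension of the $\chi_p$-isotypic subspace $V_\lambda[\chi_p]$ of $V_\lambda|_M$. The lemma therefore reduces to a branching question: characterize those $U(n)$-highest weights $\lambda$ for which $V_\lambda[\chi_p]$ is nonzero, check that in this case it is one-dimensional, and exhibit a bijection between the resulting set of $\lambda$'s and $\Lambda_{|p|}$ via restriction $\lambda\mapsto\lambda|_{\mathfrak{a}}$.

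The second step is the branching analysis $V_\lambda|_M$ using Gelfand--Tsetlin interlacing (or equivalently Littlewood--Richardson). Writing $\lambda = (\lambda_1\ge\cdots\ge\lambda_n)$, one asks when $V_{(p,\ldots,p)}^{U(r)}\boxtimes\mathbf{1}^{U(n-r)}$ appears inside $V_\lambda|_M$. Because the spherical vector for $\chi_p$ must also be a weight vector for the centralizer $\mathfrak{t}$, imposing this together with the $\det^p$-twist forces $\lambda$ to have the palindromic shape
\[
\lambda = \bigl(\tfrac{\nu_1}{2}+p_+,\ldots,\tfrac{\nu_r}{2}+p_+,\;0,\ldots,0,\;-\tfrac{\nu_r}{2}-p_-,\ldots,-\tfrac{\nu_1}{2}-p_-\bigr),
\]
where $p_+ = \max(p,0)$, $p_- = \min(p,0)$, and $\nu=(\nu_1\ge\cdots\ge\nu_r\ge 0)$ is an even partition. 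Using the identification $e_j|_{\mathfrak{h}} = \epsilon_j - \epsilon_{n+1-j}$ compatible with the root data of Section \ref{sec-K}, such a $\lambda$ restricts on $\mathfrak{a}$ to $\nu + (|p|,\ldots,|p|)$, which is precisely the generic element $\mu \in \Lambda_{|p|}$. The multiplicity-one claim $\dim V_\lambda[\chi_p] = 1$ then follows from the fact that $(p,\ldots,p,0,\ldots,0)$ is an \emph{extremal} $M$-weight of $V_\lambda$: extremal branching multiplicities are always one, by the standard argument that an extremal weight vector in $V_\lambda$ is (up to scalar) determined by being annihilated by the appropriate raising operators in $\mathfrak{m}$.

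The main obstacle is executing the branching analysis with enough precision to recover the exact shift $(|p|,\ldots,|p|)$ and to confirm multiplicity one uniformly in $p\in\mathbb{Z}$. In particular one must treat $p>0$ and $p<0$ symmetrically (they are interchanged by the contragredient line bundle), verify the parity/integrality conditions that force $\mu$ to lie in the shifted set $\Lambda_{|p|}$ rather than some slightly different lattice, and check the normalization of the restriction map $\mathfrak{h}^*\to\mathfrak{a}^*$ against the conventions fixed in (\ref{rho}). Once these bookkeeping issues are resolved, the existence and uniqueness statement in the lemma follows directly from the branching description above.
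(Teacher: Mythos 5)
You should note first that the paper does not prove Lemma \ref{schli} at all: it is quoted verbatim from Schlichtkrull \cite[Th.\ 7.2]{schlicht}. So you are attempting a from-scratch proof, and your strategy (Frobenius reciprocity for $C^\infty(Y,p)=\mathrm{Ind}_M^K\chi_p$ plus a branching analysis for $U(r)\times U(n-r)\subset U(n)$) is indeed the standard route to that theorem in this special case. However, as written there are two genuine gaps. The more serious one is the multiplicity-one step, which is exactly the uniqueness assertion of the lemma: you justify $\dim V_\lambda[\chi_p]=1$ by saying that $(p,\ldots,p,0,\ldots,0)$ is an \emph{extremal} $M$-weight of $V_\lambda$. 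It is not: a weight is extremal only if it lies in the Weyl orbit of $\lambda$, which here happens only for $\nu=0$ (already for $p=0$ the relevant weight is $0$, which is far from extremal in a nontrivial spherical $V_\lambda$, yet the trivial $M$-type still occurs exactly once). So the principle "extremal branching multiplicities are one" does not apply, and the uniqueness claim is left unproved. A correct elementary substitute: twist by $\det^N$ so that everything is polynomial; Frobenius reciprocity then identifies the multiplicity with the Littlewood--Richardson coefficient $c^{\lambda+(N^n)}_{((p+N)^r),\ (N^{\,n-r})}$, and since a product of two rectangular Schur functions is multiplicity-free this coefficient is $0$ or $1$. Alternatively one can invoke general multiplicity-one results for one-dimensional representations of symmetric subgroups, or simply cite \cite{schlicht} as the paper does.

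The second gap is that the branching computation carrying the existence statement is asserted rather than executed (you flag it yourself as "the main obstacle"), and the asserted highest-weight shape is wrong for $p<0$: with $p_-=\min(p,0)$ your last $r$ entries $-\nu_j/2-p_-=-\nu_j/2+|p|$ give $\sum_i\lambda_i=r|p|=-rp$, which is the central character of $\chi_{-p}$, not of $\chi_p$, and dominance against the middle zeros also fails. The correct shape is $\lambda_j=\nu_j/2+p_+$, $\lambda_{n+1-j}=-\nu_j/2+p_-$ for $j\le r$ with the middle $n-2r$ entries zero, equivalently $\lambda_j+\lambda_{n+1-j}=p$; then $\mu_j=\lambda_j-\lambda_{n+1-j}=\nu_j+|p|$ and one lands in $\Lambda_{|p|}$ with the right parity. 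So your approach is viable and genuinely different from the paper's (which simply cites Schlichtkrull, and whose Lemma \ref{jlt} handles the related decomposition by quoting \cite{gkz-JLT}), but to count as a proof you must actually carry out the interlacing/Littlewood--Richardson computation with the corrected shape and replace the extremal-weight argument by a valid multiplicity-one argument.
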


One also has an exact analog of the Harish-Chandra homomorphism
\cite{Shimura-1990, Shimeno-1990}.

\begin{lemma}
\label{Hcp}Let $D$ be a $K$-invariant differential operator on $C^{\infty
}(Y,p)$. There is a unique $\mathcal{W}_{r}$-invariant polynomial $p_{D}$ of
degree $ord\left(  D\right)  $ such that the eigenvalue of $D$ on $V_{\mu}$ is
$p_{D}\left(  \mu+\rho\right)  $.
\end{lemma}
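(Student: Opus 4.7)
My plan is to combine the multiplicity-one property with the generalized Harish-Chandra isomorphism for homogeneous line bundles. First, by Lemma \ref{schli} the space $C^\infty(Y,p)$ decomposes as the multiplicity-free $K$-module $\bigoplus_{\mu\in\Lambda_{|p|}} V_\mu$. Since $D$ commutes with the $K$-action and each $V_\mu$ appears with multiplicity one, Schur's lemma guarantees that $D$ acts on $V_\mu$ by a scalar $\chi_D(\mu)$. The uniqueness part of the claim is then immediate: any $\mathcal{W}_r$-invariant polynomial that agrees with $\chi_D$ at $\mu+\rho$ for every $\mu\in\Lambda_{|p|}$ is determined, since $\Lambda_{|p|}+\rho$ is Zariski dense in $\mathfrak{a}^*$.

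To upgrade $\chi_D(\mu)$ to the evaluation at $\mu+\rho$ of a $\mathcal{W}_r$-invariant polynomial $p_D$ of degree $\mathrm{ord}(D)$, I would invoke the line bundle version of the Harish-Chandra isomorphism developed in \cite{Shimura-1990, Shimeno-1990}. The idea is to pass to the non-compact dual of $Y$, extract the radial part of $D$ on the Cartan subspace $\mathfrak{a}$ twisted by the character $\chi_p$ of $M$, and then, after a suitable conjugation by a scalar-valued function on the positive chamber, land the result in the Weyl-invariant polynomial algebra on $\mathfrak{a}^*$. The natural symbol filtration by differential order is respected, which yields the degree bound $\mathrm{ord}(D)$, and evaluating the resulting polynomial at $\mu+\rho$ reproduces $\chi_D(\mu)$. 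To align indexing with the compact picture I would use the same $\sigma$-intertwining as in the proof of Theorem \ref{thm-ncpt}, which identifies the $M$-fixed (twisted) vectors across compact and non-compact dual pictures.

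The main obstacle, and the reason this requires the Shimura--Shimeno machinery rather than a direct calculation, is the $\mathcal{W}_r$-invariance of $p_D$. In the spherical case ($p=0$) this is classical and follows from Chevalley's restriction theorem applied to the symbols of invariant operators. For nontrivial $p$ the invariance is delicate, because the $M$-character $\chi_p$ is not preserved by $\mathcal{W}_r$, so the argument must instead proceed through the functional equations of the generalized spherical functions attached to $\chi_p$, together with the analog of Harish-Chandra's $c$-function for the line bundle. Once invariance is in place on a Zariski dense set of parameters it extends uniquely to all of $\mathfrak{a}^*$, and the lemma follows.
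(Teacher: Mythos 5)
Your proposal is correct and follows essentially the same route as the paper, which offers no independent argument for this lemma but simply invokes the generalized Harish-Chandra homomorphism for homogeneous line bundles from \cite{Shimura-1990, Shimeno-1990}; your additional observations (Schur's lemma via the multiplicity-free decomposition of Lemma \ref{schli}, and uniqueness from Zariski density of $\Lambda_{|p|}+\rho$) are exactly the implicit ingredients. Nothing further is needed.
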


In our setting, with $\Psi\left(  w\right)  =\operatorname{det}\left(
w^{\dagger}w\right)  $ and $L=\partial\left(  \Psi\right)  $ as in
(\ref{=Psi}), the operator
\[
C_{s,l}=\Psi^{l+s}L^{l}\Psi^{-s}%
\]
descends to a $K$-invariant differential operator of order $lr$ on $C^{\infty
}(Y,p)$. We will show that its eigenvalues can be expressed in terms of the
$\mathcal{W}_{r}$-invariant polynomial
\[
q_{l}\left(  t,z\right)  =\prod\nolimits_{i=0}^{l-1}\prod\nolimits_{j=1}%
^{r}\left[  \left(  t+2i\right)  ^{2}-z_{j}^{2}\right]  .
\]

We first construct an analog of the space $\mathcal{P}^{m}$. Let
$\mathcal{R}^{m}$ be the space of holomorphic polynomials $f$ on
$W=Mat_{n\times r}(\mathbb{C})$ satisfying
\begin{equation}
f(wg)=\operatorname{det}(g)^{m}f(w)\text{ for }g\in G_{r}, \label{R-m}%
\end{equation}
and let $\mathcal{R}^{m+b,m}$ be the space of sesqui-holomorphic polynomials
$f$ on $W\times\overline{W}$ satisfying
\[
f(w_{1}g_{1},w_{2}g_{2})=\operatorname{det}(g_{1})^{m+b}\overline
{\operatorname{det}(g_{2})^{m}}f(w_{1},w_{2})\text{ for }g_{1},g_{2}\in
G_{r}\text{.}%
\]
We will describe the $K$-type structure of $\mathcal{R}^{m+b,m}$ following
\cite{gkz-JLT}. We consider two subgroups of $G=GL(n,\mathbb{C}),$ written as
$(r,n-r)\times(r,n-r)$ block matrices, and a character
\[
P=\left\{
\begin{bmatrix}
a_{11} & a_{12}\\
0 & a_{22}%
\end{bmatrix}
\right\}  ,\quad Q=\left\{
\begin{bmatrix}
1 & a_{12}\\
0 & a_{22}%
\end{bmatrix}
\right\}  ,\quad\chi_{m}\left(
\begin{bmatrix}
a_{11} & a_{12}\\
0 & a_{22}%
\end{bmatrix}
\right)  =\operatorname{det}\left(  a_{11}\right)  ^{m}.
\]
Then $Q$ is the stabilizer of $x_{0}\in X$ (\ref{=x0}) and so we have natural
isomorphisms
\[
X\approx G/Q,\quad Y\approx X/G_{r}\approx G/P\text{.}%
\]
Thus if $\mathcal{L}_{m}$ is the line bundle on $G/P$ induced by the character
$\chi_{m}$ then polynomials in $\mathcal{R}^{m}$, after restriction to the
open set $X$, can be identified as holomorphic sections of $\mathcal{L}_{m}$.

We realize $C^{\infty}(Y,b)$ as the space of smooth functions on $X$
transforming as $f(xg)=\operatorname{det}^{b}(g)f(x)$, $g\in G_{r}.$

\begin{lemma}
\label{jlt} If $b\geq0$ then the map $J_{m}:f(x,y)\mapsto\operatorname{det}%
(x^{\dagger}x)^{-m}f(x,x)$ defines a $K$-isomorphism from $\mathcal{R}%
^{m+b,m}$ into a subspace of $C^{\infty}(Y,b)$ and we have $\mathcal{R}%
^{m+b,m}\approx\sum_{\mu\in\Lambda_{b}^{m}}V_{\mu}$ under the diagonal action
of $K$.
\end{lemma}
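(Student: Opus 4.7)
My plan is to verify the three assertions of the lemma in turn, with the $K$-type decomposition as the main technical step. For well-definedness of $J_m$ into $C^{\infty}(Y,b)$, for $f\in\mathcal{R}^{m+b,m}$ and $g\in G_{r}$ the identity $(xg)^{\dagger}(xg)=g^{\dagger}(x^{\dagger}x)g$ together with the defining covariance of $f$ yields
\[
(J_{m}f)(xg)=|\det(g)|^{-2m}\det(x^{\dagger}x)^{-m}\det(g)^{m+b}\overline{\det(g)^{m}}\,f(x,x)=\det(g)^{b}(J_{m}f)(x),
\]
and the $K$-equivariance is immediate because $K$ acts on $W$ by unitary left multiplication (so $x^{\dagger}x$ is preserved) and diagonally on the two arguments of $f$. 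For injectivity, $f\in\mathcal{R}^{m+b,m}$ is a polynomial in the matrix entries of $w_{1}$ and $\overline{w_{2}}$; its restriction to the diagonal $w_{1}=w_{2}=x$ is a polynomial in $x_{ij}$ and $\overline{x_{ij}}$, which are $\mathbb{R}$-algebraically independent, so $J_{m}f=0$ forces every coefficient of $f$ to vanish.

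For the $K$-module decomposition, the essential observation is that $G_{r}\times G_{r}$ acts independently on the two factors of $\mathcal{P}(W)\otimes\overline{\mathcal{P}(W)}$, so the character isotypic subspace factorizes as
\[
\mathcal{R}^{m+b,m}\cong\mathcal{R}^{m+b}\otimes\overline{\mathcal{R}^{m}}
\]
as $K$-modules under the diagonal action, where $\mathcal{R}^{k}$ is as in (\ref{R-m}). By the classical $(GL_{n},GL_{r})$-Howe duality $\mathcal{P}(W)=\bigoplus_{\nu}V_{\nu}^{(n)\ast}\otimes V_{\nu}^{(r)}$, the covariance $\det^{k}$ on the $G_{r}$-factor forces $\nu=(k,\ldots,k)$, so $\mathcal{R}^{k}$ is the irreducible $K$-module $V_{(k^{r},0^{n-r})}^{(n)\ast}$. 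Consequently $\mathcal{R}^{m+b,m}$ is, as a $K$-module, the tensor product of two rectangular $U(n)$-irreducibles,
\[
V_{((m+b)^{r},0^{n-r})}^{(n)\ast}\otimes V_{(m^{r},0^{n-r})}^{(n)}.
\]

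The main obstacle is to decompose this rectangle-by-rectangle tensor product into $U(n)$-irreducibles and to match the result with $\{V_{\mu}:\mu\in\Lambda_{b}^{m}\}$. This is a combinatorial step which can be carried out by the Littlewood--Richardson rule for rectangular partitions; more efficiently, the required decomposition is essentially \cite[Theorem 5.3]{gkz-JLT}, in a form already tailored to the Grassmannian line bundles. Once this decomposition is in hand, the match with $\Lambda_{b}^{m}$ is immediate: $J_{m}(\mathcal{R}^{m+b,m})$ sits inside $C^{\infty}(Y,b)=\bigoplus_{\mu\in\Lambda_{b}}V_{\mu}$ by Lemma \ref{schli}, and a $K$-type $V_{\mu}$ appearing there is uniquely determined by its $\mathfrak{a}$-restricted highest weight, which identifies the multiset of $\mu$'s produced by the tensor product computation with $\Lambda_{b}^{m}$.
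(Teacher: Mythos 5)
Your proposal is correct and follows essentially the same route as the paper: you identify $\mathcal{R}^{m+b,m}$ with $\mathcal{R}^{m+b}\otimes\overline{\mathcal{R}^{m}}$ under the diagonal $K$-action and invoke the tensor-product decomposition of \cite[Theorem 5.3]{gkz-JLT}, which is exactly the paper's argument. The extra details you supply (the covariance computation showing $J_{m}f\in C^{\infty}(Y,b)$, injectivity via algebraic independence of $x_{ij}$ and $\overline{x_{ij}}$, and the Howe-duality identification of $\mathcal{R}^{k}$ as a rectangular irreducible) are correct elaborations of steps the paper leaves implicit.
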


\begin{proof}
It is easy to see that the map $f\otimes g\mapsto F$, $F(w_{1},w_{2}%
)=f(w_{1})g(w_{2})$ is a $K$-equivariant isomorphism from $\mathcal{R}%
^{m+b}\otimes\overline{\mathcal{R}^{m}}$ to $\mathcal{R}^{m+b,m}$. The result
follows from the decomposition of the tensor product in \cite[Theorem
5.3]{gkz-JLT}. (Note that in \cite{gkz-JLT}, $\mathcal{R}^{m}$ is denoted
$A^{m,2}$ and $e_{j},\mu_{j}$ are denoted $\frac{1}{2}\beta_{j},$ $2m_{j}$.)
\end{proof}

\begin{theorem}
\label{line-bdle-cpt} The eigenvalue of $C_{s,l}$ on $V_{\mu}\subset L^{2}%
(Y,${$p$}$)$ is $q_{l}(2s-\rho_{1}- p,\mu+\rho)$.
\end{theorem}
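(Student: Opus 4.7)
The plan is to follow the template of Theorem \ref{Th:main1}, with Lemma \ref{Hcp} replacing the spherical Harish-Chandra isomorphism, Lemma \ref{schli} replacing Cartan--Helgason, and Lemma \ref{jlt} replacing Lemma \ref{Lem: Rmm}. I first treat the case $l = 1$; the general case follows from the factorization $C_{s,l} = \prod_{i=0}^{l-1} C_{s+i}$ exactly as in Theorem \ref{Th:main1}. Write $c_{s,\mu,p}$ for the eigenvalue of $C_s = C_{s,1}$ on $V_\mu \subset C^\infty(Y,p)$, and assume $p \ge 0$ (the case $p<0$ follows by interchanging the holomorphic and antiholomorphic arguments in Lemma \ref{jlt}).

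Following the proof of Lemma \ref{Lem: p}, I take the (Frobenius-unique) $(M,\chi_p)$-covariant section $\phi \in V_\mu$ normalized by $\phi(x_0) = 1$, evaluate $C_s\phi = c_{s,\mu,p}\phi$ at $x_0$ using $\Psi(x_0) = 1$ to obtain $c_{s,\mu,p} = L(\Psi^{-s}\phi)(x_0)$, and combine the Leibniz expansion with Lemma \ref{Hcp} to express this as $P_p(s,\mu+\rho)$ for a polynomial $P_p$ of $s$-degree $\le 2r$ and $\mathcal{W}_r$-invariant $z$-degree $\le 2r$. The leading $s^{2r}$-coefficient of $P_p$ comes from derivatives landing entirely on $\Psi^{-s}$, hence equals the leading coefficient of $L(\Psi^{-s})(x_0)$; in the complex-coordinate form $L = 2^{2r}\det(\partial_w^T\partial_{\bar w})$ (using $\partial(w_{ki}) = 2\partial_{\bar w_{ki}}$ and $\partial(\bar w_{ki}) = 2\partial_{w_{ki}}$), together with the evaluations $\partial_{w_{ki}}\Psi(x_0) = \partial_{\bar w_{ki}}\Psi(x_0) = \delta_{ki}$, this computes to $2^{2r}$ as in Lemma \ref{Lem: lead}.

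The key step is the vanishing $c_{-m,\mu,p} = 0$ whenever $\mu \in \Lambda_p$ satisfies $\mu_1 = 2m+p$. For such $\mu$, realize $V_\mu \subset \mathcal{R}^{m+p,m}$ via $J_m: F \mapsto \Psi^{-m}F(x,x)$ from Lemma \ref{jlt}, giving $C_{-m}(J_mF) = \Psi^{1-m}LF(x,x)$. For sesqui-holomorphic $F(w_1,w_2)$ one has $\partial_{x}|_{\mathrm{diag}} = \partial_{w_1}|_{w_1=w_2=x}$ and $\partial_{\bar x}|_{\mathrm{diag}} = \partial_{\bar w_2}|_{w_1=w_2=x}$, so $LF(x,x) = 2^{2r}(DF)(x,x)$ where $DF := \det(\partial_{w_1}^T\partial_{\bar w_2})F$. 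A direct bookkeeping of the transformations $\partial_{w_1}\mapsto\partial_{w_1}g^{-T}$ and $\partial_{\bar w_2}\mapsto\partial_{\bar w_2}\bar g^{-T}$ under $w_i\mapsto w_ig$ shows $D$ is $K$-equivariant and maps $\mathcal{R}^{m+p,m}\to\mathcal{R}^{m+p-1,m-1} \cong \bigoplus_{\nu\in\Lambda_p^{m-1}}V_\nu$. Since $\mu_1 = 2m+p > 2(m-1)+p$ places $\mu\notin\Lambda_p^{m-1}$, the $V_\mu$-component of $DF$ is zero, so $DF = 0$, $LF(x,x) = 0$, and $C_{-m}V_\mu = 0$.

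These vanishings give $P_p(s,z) = 0$ on a Zariski dense subset of the hyperplane $\{2s-\rho_1-p+z_1 = 0\}$, forcing divisibility of $P_p$ by $2s-\rho_1-p+z_1$; $\mathcal{W}_r$-invariance in $z$ then forces divisibility by $q(2s-\rho_1-p, z) = \prod_j[(2s-\rho_1-p)^2 - z_j^2]$. Matching the $s$-degree $2r$ and the leading coefficient $2^{2r}$ yields $P_p(s,z) = q(2s-\rho_1-p, z)$; the general-$l$ case then follows by composing $C_{s+i}$ for $i = 0,\ldots,l-1$, producing $q_l(2s - \rho_1 - p, \mu+\rho)$ as claimed. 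I expect the most technical step to be the clean verification that $D$ respects the $\mathcal{R}^{\cdot,\cdot}$-grading and is $K$-equivariant after diagonal restriction, as this requires tracking the complex structure on $W$ and the determinantal mixing of holomorphic and antiholomorphic derivatives; everything else tracks the proof of Theorem \ref{Th:main1} essentially verbatim.
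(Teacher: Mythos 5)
Your proof is correct and, for $p\geq 0$, tracks the paper's argument essentially step for step: the same reduction to $l=1$ via the factorization $C_{s,l}=\prod_{i}C_{s+i}$, the same polynomiality and leading-coefficient argument modeled on Lemmas \ref{Lem: p} and \ref{Lem: lead}, the same vanishing $c_{-m,\mu}=0$ when $\mu_{1}=2m+p$ obtained from $C_{-m}J_{m}=J_{m-1}L$ together with Lemma \ref{jlt}, and the same divisibility-plus-$\mathcal{W}_{r}$-invariance conclusion. You diverge in two places, both benign. First, you make explicit, via $D=\det(\partial_{w_{1}}^{T}\partial_{\bar{w}_{2}})$ acting on sesqui-holomorphic polynomials and the identity $LF(x,x)=2^{2r}(DF)(x,x)$, the precise sense in which ``$L$ maps $\mathcal{R}^{m+p,m}$ to $\mathcal{R}^{m+p-1,m-1}$''; the paper simply asserts this, so your bookkeeping is a useful amplification rather than a detour. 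Second, for $p<0$ the paper does not rerun the argument: it uses the intertwiner $Tf(x)=\operatorname{det}(x^{\dagger}x)^{p}f(\bar{x})$, a $K$-isomorphism $C^{\infty}(Y,-p)\rightarrow C^{\infty}(Y,p)$ satisfying $C_{s,l}T=TC_{s-p,l}$, which reduces the claim to the already-proved case $-p>0$ and immediately yields $q_{l}(2(s-p)-\rho_{1}-(-p),\mu+\rho)=q_{l}(2s-\rho_{1}-p,\mu+\rho)$. Your alternative of interchanging the holomorphic and antiholomorphic factors also works, but be aware it is not a pure interchange: to land in $C^{\infty}(Y,p)$ the embedding must be modified to $F\mapsto\operatorname{det}(x^{\dagger}x)^{-(m+|p|)}F(x,x)$ on $\mathcal{R}^{m,m+|p|}$, so the vanishing occurs at $s=-(m+|p|)=-(\mu_{1}-p)/2$; naive conjugation alone would instead produce the wrong sign $q_{l}(2s-\rho_{1}+p,\cdot)$, and the shift in the exponent of $\Psi$ is exactly what restores the correct hyperplane $2s-\rho_{1}-p+\bar{\mu}_{1}=0$. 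With that adjustment spelled out, either route closes the argument.
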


\begin{proof}
The proof is very similar to Theorem \ref{Th:main1} and we sketch the main
arguments. We assume first that $p=b\geq0$. As in Theorem \ref{Th:main1} it
suffices to prove the result for $l=1$. By the Harish-Chandra homomorphism of
Lemma \ref{Hcp} and the argument of Lemma \ref{Lem: p}, the eigenvalues
$c_{s,\mu}$ of $C_{s}=C_{s,1}$ are given by a $\mathcal{W}_{r}$-invariant
polynomial of degree $2r$ in $\left(  s,\mu+\rho\right)  $. We first prove
that $c_{-m,\mu}$ vanishes if $2m=\mu_{1}-b$.

For this we note that $C_{-m}=\Psi^{-m+1}L\Psi^{m}$ and hence
\[
C_{-m}J_{m}=J_{m-1}L.
\]

Now $L$ maps the space $\mathcal{R}^{m+b,m}$ to $\mathcal{R}^{m-1+b,m-1}$ and
if $\mu_{1}=2m+b$ then by Lemma \ref{jlt} the $K$-type $\mu$ occurs in
$\mathcal{R}^{m+b,m}$ but not in $\mathcal{R}^{m-1+b,m-1}$, therefore $L$ acts
by $0$ on this $K$-type. Hence $C_{-m}J_{m}$ acts by $0$, since $J_{m}$ is an
isomorphism on this $K$-type, $C_{-m}$ acts by $0$ on $V_{\mu}$ and so
$c_{-m,\mu}=0.$

Finally by $\mathcal{W}_{r}$-invariance of $c_{s,\mu}$ it follows that up to
an overall constant multiple we have%
\begin{equation}
c_{s,\mu}=\prod\nolimits_{j=1}^{r}\left[  \left(  2s-\rho_{1}-p\right)
^{2}-\left(  \mu_{j}+\rho_{j}\right)  ^{2}\right]  =q_{1}\left(  2s-\rho
_{1}-p,\mu+\rho\right)  \text{.} \label{=csmup}%
\end{equation}
As before, equality follows by an easy calculation of leading coefficients.

For $p<0$, we realize $C^{\infty}(Y,\pm p)$ as function spaces on $X$ as above
and consider the following operator $T$ acting on functions on $X$
\[
Tf(x)=\operatorname{det}(x^{\dagger}x)^{p}f(\bar{x}).
\]
It is easy to see that $T$ is a $K$-isomorphism from $C^{\infty}\left(
Y,-p\right)  $ to $C^{\infty}\left(  Y,p\right)  $. Moreover, the function
$\Psi\left(  x\right)  =\operatorname{det}\left(  x^{\dagger}x\right)  $ and
the operator $L=\partial\left(  \Psi\right)  $ are invariant under conjugation
$x\mapsto\bar{x}$, and it follows that we have%
\[
TC_{s,l}=C_{s-p,l}T\text{.}%
\]
By the result for $p>0$, the eigenvalue of $C_{s,l}$ on $V_{\mu}$ in
$C^{\infty}\left(  Y,-p\right)  $ is
\[
q_{l}(2(s-p)-\rho_{1}-(-p),\mu+\rho)=q_{l}(2s-\rho_{1}-p,\mu+\rho)
\]
as claimed.
\end{proof}

We also consider the non-compact dual $\tilde{Y}$ of $Y$, and the space
\begin{equation}
C^{\infty}(\tilde{Y},p)=\left\{  f:Z\rightarrow\mathbb{C}\mid
f(zg)=\operatorname{det}(g)^{p}f(z)\text{ for all }g\in G_{r}\right\}
\label{line-bundle-sec}%
\end{equation}
where the open set $Z\subset W$ is as in (\ref{z-def}), then the operators
$C_{s,l}$ act as invariant differential operators on $C^{\infty}(\tilde{Y}%
,p)$. The $p$-spherical functions
\[
\phi_{\eta,p};\quad\eta\in\widetilde{\mathfrak{a}}^{\ast},p\in\mathbb{Z}%
\text{,}%
\]
as defined in \cite{Shimeno-jfa}, belong to the space $C^{\infty}(\tilde
{Y},p)$ \cite[Theorem 4.6]{gkz-JLT}, and are simultaneous eigenfunctions for
the $C_{s,l}$. As in Theorem \ref{thm-ncpt} one may compute the eigenvalues by
writing $\eta$ in the form $\widetilde{\lambda}$ for $\lambda\in
\mathfrak{a}^{\ast}$ as in (\ref{=lam-til}).

\begin{theo+}
For all $\lambda\in\mathfrak{a}^{\ast}$ we have $\tilde{C}_{s,l}\left(
\phi_{\widetilde{\lambda},p}\right)  =q_{l}(2s-\rho_{1}-p,\lambda+\rho
)\phi_{\widetilde{\lambda},p}$.
\end{theo+}

\begin{proof}
This is proved by exactly the same method as Theorem \ref{thm-ncpt} using
Theorem \ref{line-bdle-cpt}.
\end{proof}

\begin{rema+}
For any $p$ there is a $\tilde{K}_{n}=U(r,n-r)$-invariant $L^{2}$ space of
sections of the corresponding line bundle. For negative integer $p\leq-n$ the
$L^{2}$-space of holomorphic sections forms the holomorphic discrete series,
and it is generated by the spherical function $\phi_{\lambda,p}$ at
$\lambda=-\rho-(p,\cdots,p)$; see \cite{Shimeno-jfa}. In particular it is
annihilated by $C_{0}$ which is closely related to Shimura operators
\cite{gz-shimura}; see the remark below.
\end{rema+}

\begin{rema+}
Let $n=2r$ and $p=0$. The symmetric space $\tilde{Y}$ is the tube domain
$U(n,n)/U(n)\times U(n)$. The space $C^{\infty}(\tilde{Y})$ is now realized as
homogeneous functions on the subset $Z$ of $2r\times r$-matrices
$z=[z_{1},z_{2}]^{t}$ satisfying (\ref{line-bundle-sec}) for $p=0$. Let now
$s=-(r-1)$. Our results above claim that the differential
operator\footnote{Here we use the usual convention for complex
differentiation, $\partial_{w}=\frac{1}{2}(\partial_{u}-i\partial_{v})$ for a
complex variable $w=u+iv$. The appearance of the coefficient $2^{2r}$ is due
to the fact that our isomorphism (\ref{=ptl}) is done through the underlying
Euclidean inner product}%

\[
\tilde{C}_{s}=2^{2r}\operatorname{det}(z_{1}^{\ast}z_{1}-z_{2}^{\ast}%
z_{2})^{r}\operatorname{det}(\partial_{1}^{\ast}\partial_{1}-\partial
_{2}^{\ast}\partial_{2})^{r}\operatorname{det}(z_{1}^{\ast}z_{1}-z_{2}^{\ast
}z_{2})^{1-r}%
\]
has eigenvalue
\[
\prod\nolimits_{j=1}^{r}(1-(i\lambda_{j})^{2}).
\]
The operator $\tilde{C}_{s}$ is up to a non-zero constant, the Shimura
operator $\mathcal{L}_{1^{r}}$ \cite{Shimura-1990, gz-shimura}, as can be seen
by comparing $\tilde{C}_{s}$ with the explicit formula for $\mathcal{L}%
_{1^{r}}$ in the Siegel domain realization \cite[formula (3.8)]{gz-shimura}.
The eigenvalue for $\tilde{C}_{s}$ is also a consequence of \cite[Th.
3.4]{gz-shimura}, noticing again here we are using different basis, our basis
vectors $\{e_{j}\}$ are $\{\frac{1}{2}\beta_{j}\}$ there.
\end{rema+}

\section{Inverting the Radon transform\label{section-Radon}}

We shall apply our results above to find explicit differential operators
inverting the Radon transform. To begin with, we fix integers $r,r^{\prime}$
such that
\begin{equation}
r\leq n-r,\quad r\leq r^{\prime}, \label{eq:r-r'}%
\end{equation}
and we consider the corresponding Grassmannian manifolds
\[
Y=Gr_{n,r}(\mathbb{F})=K_{n}/K_{r}\times K_{n-r},\;Y^{\prime}=Gr_{n,r^{\prime
}}(\mathbb{F})=K_{n}/K_{r^{\prime}}\times K_{n-r^{\prime}}%
\]
We write $y$ and $\eta$ for typical elements of $Y$ and $Y^{\prime}$ and
define the incidence sets
\[
A_{\eta}=\left\{  y\in Y:y\subset\eta\right\}  \text{ and }B_{y}=\left\{
\eta\in Y^{\prime}:y\subset\eta\right\}  .
\]
These are homogeneous spaces for the stabilizers $K_{\eta}$ and $K_{y}$ of
$\eta$ and $y$ in $K=K_{n}$, and hence admit unique invariant probability
measures $dy$ and $d\eta$. The Radon transforms $R=R_{r^{\prime},r}:C^{\infty
}(Y)\rightarrow C^{\infty}(Y^{\prime})$ and $R^{\prime}=R_{r,r^{\prime}}%
:{C}^{\infty}(Y^{\prime})\rightarrow C^{\infty}(Y)$ are defined to be
\begin{equation}
Rf(\eta)=\int_{A_{\eta}}f(y)dy,\quad R^{\prime}F(y)=\int_{B_{y}}F(\eta)d\eta.
\label{rad-def}%
\end{equation}
Let $V_{\mu}$ be a $K_{n}$-isotypic subspace of $C^{\infty}(Y)$ as in Sect.
\ref{sec-K}. Since $R^{\prime}R$ is a $K_{n}$-invariant operator on $Y$, by
Schur's lemma there exist scalar eigenvalues $\gamma_{\mu}=\gamma_{\mu
}(r,r^{\prime},n)$ such that
\begin{equation}
R^{\prime}Rv=\gamma_{\mu}v\text{ for all }v\in V_{\mu}, \label{eq:gamma-mu}%
\end{equation}
We first describe an explicit formula for these eigenvalues $\gamma_{\mu}$.
For this we set%
\begin{equation}
\alpha=\frac{d}{2}r,\,\beta=\frac{d}{2}(n-r^{\prime}),l=\frac{d}{2}(r^{\prime
}-r)\text{;} \label{eq:abl}%
\end{equation}
let $\left(  c\right)  _{m}=c(c+1)\cdots(c+m-1)$ denote the Pochammer symbol,
and for $\mu$ define
\begin{align*}
m_{i}  &  =\mu_{i}/2,\quad\mathbf{m}=(m_{1},\ldots,m_{r}),\quad|\mathbf{m}%
|=m_{1}+\cdots+m_{r},\\
(c)_{\mathbf{m}}  &  =\prod\nolimits_{j=0}^{r-1}\left(  c-dj/2\right)
_{m_{j+1}}\text{,\quad}\eta_{\mu}\left(  \nu\right)  =\frac{(-1)^{|\mathbf{m}%
|}(-\nu)_{\mathbf{m}}}{(dn/2+\nu)_{\mathbf{m}}}.
\end{align*}

\begin{theorem}
\label{gam-mu}For $n,r,r^{\prime}$ as in (\ref{eq:r-r'}), we have$.$
\begin{equation}
\gamma_{\mu}=\eta_{\mu}\left(  -\alpha\right)  \eta_{\mu}\left(
-\beta\right)  =\dfrac{(\alpha)_{\mathbf{m}}(\beta)_{\mathbf{m}}}%
{(\alpha+l)_{\mathbf{m}}(\beta+l)_{\mathbf{m}}}. \label{=gam-mu}%
\end{equation}

\end{theorem}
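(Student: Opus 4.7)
The plan is to reduce the eigenvalue computation to an explicit integral which can then be evaluated using the Jack polynomial machinery developed in \cite{gz-radon-imrn, gz-radon-bsd}. Let $\phi_\mu \in V_\mu$ be the $M$-spherical vector normalized so that $\phi_\mu(y_0) = 1$; since $R'R$ commutes with the $K_n$-action, Schur's lemma gives $R'R\phi_\mu = \gamma_\mu \phi_\mu$, and evaluating at $y_0$ yields
\[
\gamma_\mu \;=\; (R'R\phi_\mu)(y_0) \;=\; \int_{B_{y_0}}\!\int_{A_\eta}\phi_\mu(y)\, dy\, d\eta.
\]
The integration will be carried out in two independent stages, matching the product structure $\gamma_\mu = \eta_\mu(-\alpha)\eta_\mu(-\beta)$.

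For the inner integral, fix $\eta \in B_{y_0}$; the set $A_\eta$ is a Grassmannian $Gr_{r',r}(\mathbb{F})$ sitting inside the $r'$-plane $\eta$. Using the identification from \cite{gz-radon-imrn, gz-radon-bsd} of the restriction of $\phi_\mu$ to the Cartan subspace with a Jack polynomial $J_\mathbf{m}^{(2/d)}$ in sine-squared principal-angle coordinates, the inner integral becomes a Jack--Selberg integral whose value depends only on the relative position of $\eta$ and $y_0$ and which produces a Pochhammer ratio in $\mathbf{m}$ built from the rank parameter $\alpha = dr/2$. The outer integral over $\eta \supset y_0$, parameterised by the complementary geometry in $\mathbb{F}^n/y_0$, yields a second Jack--Selberg evaluation built from $\beta = d(n-r')/2$. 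Assembling the two factors and using the identities $(\alpha+l)_\mathbf{m} = (dn/2-\beta)_\mathbf{m}$ and $(\beta+l)_\mathbf{m} = (dn/2-\alpha)_\mathbf{m}$ rewrites the product in the symmetric form displayed in \eqref{=gam-mu}.

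The main obstacle, and the step that genuinely requires work, is that the formulas in \cite{gz-radon-imrn, gz-radon-bsd} are established only under restrictions on $r, n$ and carry undetermined overall constants. To remove these I would argue that both sides of \eqref{=gam-mu}, after being cleared of their Pochhammer denominators, become polynomial expressions of bounded degree in $\mathbf{m}$, so equality on a Zariski-dense set of weights will imply equality in general. The overall scalar is then pinned down by testing on the trivial $K_n$-type $\mu = 0$, where $\gamma_0 = 1$ by the normalization of the probability measures $dy$ and $d\eta$, and cross-checked on one-row partitions $\mu = (2m, 0, \dots, 0)$, for which the two Jack--Selberg integrals collapse to classical one-variable Beta integrals that can be verified directly against the right-hand side. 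Together, these verifications extend the identity to the full parameter range stated in the theorem.
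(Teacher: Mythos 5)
Your opening reduction is fine, but the way you propose to split the computation does not match how the two Pochhammer factors actually arise, and the plan for pinning down the unknowns has a genuine hole. First, for an $M$-invariant function $f$ the outer integral over $B_{y_0}$ is trivial: $Rf$ is $M$-invariant on $Y'$ and $B_{y_0}$ is a single $M$-orbit, so $(R'Rf)(y_0)=Rf(\eta_0)=\int_{A_{\eta_0}}f\,dy$. Both factors $\eta_\mu(-\alpha)$ and $\eta_\mu(-\beta)$ must therefore come out of this \emph{one} integral; there is no second, independent "Jack--Selberg evaluation built from $\beta$" coming from the outer stage. Second, and more seriously, what the cited sine-transform machinery actually gives for this integral is $\eta_\mu(-\beta)\,\phi_\mu(y_1)$, where $y_1=\exp\frac{i\pi}{2}(E_1+\cdots+E_r)\cdot y_0$ and $\phi_\mu(y_1)$ is a \emph{$\mu$-dependent} special value of the spherical function, not an overall constant. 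Your proposal to fix "the overall scalar" by testing at $\mu=0$ and at one-row partitions cannot determine a quantity that varies with $\mu$ in all $r$ coordinates; nor is your Zariski-density step available, since you only verify the identity on a thin (non-dense) family of weights. The paper's essential new idea, which your proposal lacks, is a bootstrap: applying the same sine-transform formula to the extreme case $r'=n-r$, where the Gelfand integral formula gives $R_{r,n-r}R_{n-r,r}\phi_\mu(y_0)=\phi_\mu(y_1)^2$, one obtains $\phi_\mu(y_1)^2=\eta_\mu(-\alpha)\phi_\mu(y_1)$, and injectivity of $R_{r,n-r}R_{n-r,r}$ forces $\phi_\mu(y_1)=\eta_\mu(-\alpha)\neq 0$.

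Two smaller points. The restrictions in the cited sources (e.g.\ $r\le r'/2$, so that $A_{\eta_0}$ has full rank and the integral can be rewritten as $\int_Y|\mathrm{Sin}\,y|^{-2\beta}\phi_\mu(y)\,dy$) are removed in the paper not by polynomiality in $\mathbf{m}$ but by showing that, for fixed $r$ and $\mu$, both $\gamma_\mu$ and $\phi_\mu(y_1)$ are \emph{rational functions of $n$ and $r'$} (via the triangular expansions of Heckman--Opdam polynomials with rational dependence on the multiplicities), so it suffices to prove the identity for $n\gg r'\gg r$. You would need an argument of this kind in any case. Finally, note that $(\alpha)_{\mathbf{m}}$ itself is not of bounded degree in $\mathbf{m}$; only the ratio $(\alpha)_{\mathbf{m}}/(\alpha+l)_{\mathbf{m}}$ (for integer $l$) collapses to a fixed-degree rational expression, so the "clear denominators and compare polynomials" step needs to be formulated with care even where it is applicable.
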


This result is proved in \cite{Grinberg-jdg} for the complex case, and stated
without proof for the real and quaternionic cases. While the real case can
indeed be proved similarly, it seems to us that the quaternionic case requires
substantially different arguments. Moreover the statement in \cite[Theorem
6.2]{Grinberg-jdg} for the quaternionic case is wrong. As an immediate test,
the formula there cannot be correct because it vanishes on the constant
function. To be more precise we note first that all the factors in the
displayed formula should be inverted, and the factors should be
\[
(m_{j}+2(k+1)-2j)(m_{j}+2(k+1)-2j+1)(m_{j}+2(n-k-1)-2j)(m_{j}+2(n-k-1)-2j+1).
\]
The constant $k$, $n$ and $m_{j},j=0,\cdots,k$ there are our $r-1$, $n-1$ and
$\frac{\mu_{i}}{2}$, $i=1,\cdots,r$ respectively; Theorem 6.2
\cite{Grinberg-jdg} is stated for the Radon transform $R_{k,k+1}$ in the
notation there, namely for our $R_{r+1,r}$ with $r^{\prime}=r+1$ and $l=2$.

For the above reasons we give a different calculation of $\gamma_{\mu}$, using
ideas from \cite{gz-radon-imrn, gz-radon-bsd}, where the Radon transform is
studied along with the spherical transform of the sine functions on
Grassmannians and their non-compact duals, the real bounded symmetric domains.
It was proved that when $r\leq\frac{r^{\prime}}{2}$ the eigenvalues of the
Radon transform are given by the evaluation of spherical polynomials at
certain specific points up to a unknown factor; see Lemma 7.3 below. In the
present paper we first find an explicit formula for this special value.

We follow the notation in \cite{gz-radon-imrn} and let $\{E_{j}\}_{j=1}^{r}$
denote the basis of the Cartan subalgebra $\mathfrak{a}$ dual to
$\{e_{j}\}_{j=1}^{r}$. We write $K=K_{n}$ and $M=K_{r}\times K_{n-r}$ and
consider two points $y_{0}$ and $y_{1}$ in the space $Y=K/M$. The point
$y_{0}$ is the identity coset in $Y$ as in Section 2, while $y_{1}$ is the
coset of the element%

\[
\operatorname{exp}\frac{i\pi}{2}\left(  E_{1}+\cdots+E_{r}\right)
\]
Let $\phi_{\mu}$ be the $M$-spherical function in the space $V_{{\mu}}$, i.e.,
the unique element $M$-invariant function normalized so that $\phi_{\mu}%
(y_{0})=1$. As in \cite{gz-radon-imrn} let $|\operatorname{Sin}y|$ be the
$M$-invariant function on $Y$ whose restriction to $\mathfrak{a}$ is given by
the sine function.

The following lemma is proved in \cite{gz-radon-bsd} for the non-compact dual
symmetric spaces of $Y$ and $Y^{\prime}$, the proof for the compact case is
exactly the same. Let $\eta_{0}\in Y^{\prime}$ be the space $\eta
_{0}=[\mathbb{F}^{r^{\prime}}]$ with $y_{0}=[\mathbb{F}^{r}]$ the standard
subspace in $\eta_{0}$. Then the incidence set of $\eta_{0}$ is the symmetric
subspace of $Y$
\[
A_{\eta_{0}}=Gr_{r^{\prime},r}(\mathbb{F})=K_{r^{\prime}}/K_{r}\times
K_{r^{\prime}-r}\subset Y
\]
of rank $\text{min}(r,r^{\prime}-r)$. In particular if $r\leq\frac{r^{\prime}%
}{2}$ it is of the same rank as $Y$ with $\mathfrak{a}$ as a corresponding
Cartan subspace.

\begin{lemma}
Let $f$ be a $M$-invariant smooth function on $Y$.

\begin{enumerate}
\item {For $n,r,r^{\prime}$ as in (\ref{eq:r-r'}) }we have
\begin{equation}
(R^{\prime}Rf)(y_{0})=Rf(\eta_{0})=\int_{A_{\eta_{0}}}f(y)dy.
\label{radon-ver-sine-2}%
\end{equation}

\item If furthermore $r\leq\frac{r^{\prime}}{2}$ then this can be written as
an integration on $Y$,
\begin{equation}
(R^{\prime}Rf)(y_{0})=Rf(\eta_{0})=\int_{Y}|\operatorname{Sin}y|^{-2\beta
}f(y)dy, \label{radon-ver-sine}%
\end{equation}
where $\beta$ is as in (\ref{eq:abl}) and $dy$ is the $K$-invariant measure
and $|\operatorname{Sin}y|^{-2\beta}dy $ is an $M$-invariant probability
measure on $Y$.
\end{enumerate}
\end{lemma}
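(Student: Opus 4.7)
The plan is to prove the two parts separately: part (1) uses only the $M$-invariance of $Rf$ together with transitivity of an $M$-action on the incidence set $B_{y_0}$, while part (2) requires comparing Weyl-type integration formulas for two compact symmetric spaces of the same rank. Since the non-compact analogue is worked out in \cite{gz-radon-bsd}, I would follow essentially the same template, adapting it to the compact setting.

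For part (1), I would first observe that the Radon transform $R$ is $K_n$-equivariant, so the $M$-invariance of $f$ passes to $Rf$ on $Y'$. The group $M = K_r \times K_{n-r}$ (the stabilizer of $y_0$) acts on $B_{y_0}$, and its stabilizer at $\eta_0$ is $M \cap (K_{r'} \times K_{n-r'}) = K_r \times K_{r'-r} \times K_{n-r'}$, which realizes $B_{y_0} \cong K_{n-r}/(K_{r'-r} \times K_{n-r'})$ and shows that the $M$-action is transitive. Hence $Rf$, being $M$-invariant, is constant on the single $M$-orbit $B_{y_0}$ with value $Rf(\eta_0)$, and integrating against the invariant probability measure $d\eta$ gives $R'Rf(y_0) = Rf(\eta_0) = \int_{A_{\eta_0}} f(y)\,dy$ by the definition of $R$.

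For part (2), the key observation is that under the hypothesis $r \leq r'/2$ both $A_{\eta_0} = Gr_{r',r}(\mathbb{F}) = K_{r'}/(K_r \times K_{r'-r})$ and $Y$ are rank-$r$ symmetric spaces with $BC_r$ restricted root systems, so their Cartan subspaces may be identified with the same $\mathfrak{a}$. Reading off the multiplicities as in Section~\ref{sec-K}, the long root $2e_i$ has multiplicity $d-1$ in both spaces and the intermediate roots $e_i \pm e_j$ have multiplicity $d$ in both, while the short root $e_i$ has multiplicity $d(n-2r)$ in $Y$ and $d(r'-2r)$ in $A_{\eta_0}$. Applying the Weyl integration formula \cite{He2} to reduce the integration of each $M$-invariant function to an integral over $\mathfrak{a}^+$ with density $\prod_{\alpha>0}|\sin\alpha(H)|^{m_\alpha}$, the ratio of the $Y$-density to the $A_{\eta_0}$-density is concentrated in the short-root factors and equals $\prod_{i=1}^r |\sin e_i(H)|^{d(n-r')} = |\operatorname{Sin} y|^{2\beta}$, with $\beta = d(n-r')/2$ as in (\ref{eq:abl}). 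Consequently
\[
\int_{A_{\eta_0}} f(y)\,dy \;=\; c\int_Y |\operatorname{Sin} y|^{-2\beta}\,f(y)\,dy
\]
for a positive constant $c$, which I would absorb into the choice of normalization of the invariant measure $dy$ on $Y$; taking $f \equiv 1$ then confirms that $|\operatorname{Sin} y|^{-2\beta}\,dy$ is a probability measure on $Y$.

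The main technical step is the careful bookkeeping of the restricted root multiplicities and the identification of Cartan subspaces in the two Weyl integration formulas; once this setup is in place, the transitivity argument in part (1) and the normalization in part (2) are routine, and the argument proceeds in parallel with the non-compact version in \cite{gz-radon-bsd}.
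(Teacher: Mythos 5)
Your argument is correct and follows the same route the paper takes: the paper simply cites \cite{gz-radon-bsd} (where the non-compact analogue is proved) and notes the compact case is identical, and your reconstruction — transitivity of $M=K_r\times K_{n-r}$ on $B_{y_0}\cong Gr_{n-r,r'-r}$ for part (1), and comparison of the $BC_r$ Weyl integration densities for $Y$ and $A_{\eta_0}=Gr_{r',r}$ (ratio $\prod_i|\sin t_i|^{d(n-r')}=|\operatorname{Sin}y|^{2\beta}$, with the constant fixed by $f\equiv 1$) for part (2) — is exactly that argument. Note only that the exponent $d(r'-2r)\ge 0$ under $r\le r'/2$ guarantees the density is nonsingular, which your bookkeeping already makes evident.
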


We next need a couple of rationality results for $\phi_{{\mu}}(y_{1})$ and
$\gamma_{{\mu}}$.

\begin{lemma}
For fixed $r$ and ${\mu}$, $\phi_{{\mu}}(y_{1})$ is a rational function of $n$.
\end{lemma}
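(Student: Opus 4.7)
The plan is to realize $\phi_\mu$ as a multivariable Jacobi (Heckman--Opdam) polynomial evaluated at a fixed point in the Cartan subspace, and then to exploit the well-known rational dependence of such polynomials on the restricted root multiplicities.

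First, I would invoke the standard realization of the zonal $M$-spherical function on the compact symmetric space $Y = K_n/(K_r \times K_{n-r})$ of type $BC_r$: namely, $\phi_\mu$ is determined by its restriction to $\exp(i\mathfrak{a}) \cdot y_0$, on which it agrees with a Weyl-group invariant Jacobi polynomial $P_\mu(x; k)$ of Heckman--Opdam type, normalized by $\phi_\mu(y_0) = 1$. The multiplicity data enters only through the three parameters
\[
k_{e_i} = d(n-2r), \quad k_{2e_i} = d-1, \quad k_{e_i \pm e_j} = d,
\]
so, with $d$ and $r$ held fixed, only $k_{e_i}$ varies with $n$, and does so linearly.

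Second, the point $y_1 = \exp \tfrac{i\pi}{2}(E_1 + \cdots + E_r) \cdot y_0$ corresponds under this identification to the Cartan element $x_1 = \tfrac{i\pi}{2}(E_1 + \cdots + E_r) \in i\mathfrak{a}$, which is intrinsic to the basis $\{E_j\}$ of $\mathfrak{a}$ and in particular is independent of $n$. Thus
\[
\phi_\mu(y_1) = \frac{P_\mu(x_1; k)}{P_\mu(0; k)}.
\]

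Third, I would cite the standard fact that, for fixed $\mu$ and fixed rank $r$, the Jacobi polynomial $P_\mu(\cdot; k)$ has monomial coefficients that are rational functions of the multiplicity parameter $k$; this follows from constructing $P_\mu$ by Gram--Schmidt orthogonalization against the Heckman--Opdam weight (whose moments are rational in $k$), or equivalently from the rational $k$-dependence of the Heckman--Opdam hypergeometric system. Consequently both $P_\mu(x_1; k)$ and $P_\mu(0; k)$ are rational in $k$, so their ratio is rational in $k$, hence in $n$.

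The main obstacle is purely one of bookkeeping: one must be careful that $P_\mu(0; k)$ does not vanish identically in $k$ (so that dividing by it is legitimate as a rational operation), and that the implicit identifications between $\phi_\mu$ and the Heckman--Opdam polynomial respect the normalization $\phi_\mu(y_0) = 1$. Both points are standard, the first because $\phi_\mu$ is regular at $y_0$ with value $1$ by construction, and the second because the parametrization of $(K/M)^\wedge$ used in Section \ref{sec-K} matches the one used by Heckman--Opdam.
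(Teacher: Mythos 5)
Your proposal is correct and follows essentially the same route as the paper: both identify $\phi_\mu$ restricted to the torus $\exp(i\mathfrak{a})$ with a $BC_r$ Heckman--Opdam (Jacobi) polynomial whose coefficients depend rationally on the root multiplicities (the paper cites Koornwinder's Theorem 9.1 for exactly this fact), and both observe that the evaluation point $y_1$ is fixed independently of $n$, so the special value is rational in $n$.
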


\begin{proof}
It is well known (see e.g. \cite[Theorem 9.1]{Koor}) that $\phi_{\mu
}(\operatorname{exp}(it_{1}E_{1}+\cdots+it_{r}E_{r}))$ is a polynomial in
${\sin t_{1},\cdots,\sin t_{r}}$ of degree $2|\mu|$, with coefficients
depending rationally on the root multiplicities, hence on $n$ for fixed $r$.
In particular the special value at $y_{1}$ depends rational on $n.$
\end{proof}

\begin{lemma}
For fixed $r$ and $\mu$, $\gamma_{{\mu}}\left(  n,r,r^{\prime}\right)  $ is a
rational function of $n$ and $r^{\prime}$.
\end{lemma}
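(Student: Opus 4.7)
The plan is to express $\gamma_\mu$ as an explicit integral using the preceding lemma, and then to isolate the $n$-dependence from the $r'$-dependence. The rationality in $n$ will follow from essentially the same argument as in the preceding lemma; the new content is the rationality in $r'$.

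Applying the eigenvalue equation $R'R\phi_\mu=\gamma_\mu\phi_\mu$ at the base point $y_0$, using $\phi_\mu(y_0)=1$ together with (\ref{radon-ver-sine-2}), one obtains
\[
\gamma_\mu \;=\; (R'R\phi_\mu)(y_0) \;=\; \int_{A_{\eta_0}} \phi_\mu(y)\,dy,
\]
where $A_{\eta_0}=Gr_{r',r}(\mathbb F)=K_{r'}/(K_r\times K_{r'-r})$ depends on $r$ and $r'$ but not on $n$, and $dy$ is the invariant probability measure. Thus the $n$-dependence is carried entirely by the integrand.

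As in the proof of the preceding lemma, the restriction of $\phi_\mu$ to the Cartan subspace of $Y$ is a polynomial
\[
\phi_\mu\bigl(\exp(it_1E_1+\cdots+it_rE_r)\bigr)=\sum_I c_I(n)\,\sin^{2i_1}t_1\cdots\sin^{2i_r}t_r
\]
of degree $\leq 2|\mu|$ with coefficients $c_I(n)\in\mathbb Q(n)$. Substituting this expansion into the integral gives $\gamma_\mu=\sum_I c_I(n)\,I_I(r')$ with $I_I(r')=\int_{A_{\eta_0}}s^I(y)\,dy$, reducing the problem to showing that each $I_I(r')$ is a rational function of $r'$.

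For this last step I would apply the Weyl integration formula on $A_{\eta_0}$ to rewrite
\[
I_I(r')=\frac{\int_{[0,\pi/2]^{\mathrm{rk}\,A_{\eta_0}}}s^I(t)\,J_{r'}(t)\,dt}{\int_{[0,\pi/2]^{\mathrm{rk}\,A_{\eta_0}}}J_{r'}(t)\,dt},
\]
where $J_{r'}$ is a product of $\sin$ and $\cos$ factors whose exponents are the restricted root multiplicities of $Gr_{r',r}(\mathbb F)$, namely $d(r'-2r)$, $d-1$, and $d$, all affine-linear in $r'$. Since $s^I$ is a fixed monomial independent of $n,r'$, these are Selberg-type integrals, and the ratio evaluates to a product of Pochhammer symbols in the parameters, which is a rational function of $r'$. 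The main point requiring care is the subcase $r>r'/2$, where $\mathrm{rk}\,A_{\eta_0}=r'-r<r$: the monomials $s^I$ must then be restricted to a lower-dimensional Cartan of $A_{\eta_0}$ by setting the surplus $\sin^2 t_i$ to zero, but the ratio formula and its rational evaluation persist. Summing the finitely many terms then gives $\gamma_\mu\in\mathbb Q(n,r')$, as claimed.
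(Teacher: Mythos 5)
Your proposal follows essentially the same route as the paper: both start from the identity $\gamma_\mu=(R'R\phi_\mu)(y_0)=\int_{A_{\eta_0}}\phi_\mu\,dy$ of (\ref{radon-ver-sine-2}), isolate the $n$-dependence in the coefficients of the expansion of $\phi_\mu$ in the monomials $\prod_j\sin^{2i_j}t_j$ (these coefficients are rational in the root multiplicities, hence in $n$ for fixed $r$, by Koornwinder's theorem), and then must show that the moments of these monomials against the Heckman--Opdam weight of $Gr_{r',r}(\mathbb{F})$ are rational in $r'$. You diverge only in this last step: the paper observes that the normalized integral is the constant term of the expansion of $\phi_\mu$ (multiplicity function $m_\alpha$) in the orthogonal basis $\phi'_{\mu'}$ (multiplicity function $m'_\alpha$), so rationality follows from the triangularity of both monomial expansions; you instead invoke Selberg-type evaluations of the monomial moments. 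That is workable, but your assertion that the ratio for a fixed monomial ``evaluates to a product of Pochhammer symbols'' is an over-claim: the closed Pochhammer form (Kadell's integral) holds for the Jack/Heckman--Opdam polynomials themselves, and a monomial is only a combination of these with coefficients rational in the parameters, so the moment is a \emph{sum} of such products --- still rational in $r'$, which is all you need, but the clean justification is precisely the orthogonal-expansion argument the paper uses.

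The one place where you attempt more than the paper, and where your argument is not yet sound, is the subcase $r>r'/2$. There $\operatorname{rank}A_{\eta_0}=r'-r$ varies with $r'$, so the very shape of the Weyl integration formula changes with $r'$, and ``setting the surplus $\sin^2 t_i$ to zero'' does not by itself show that the resulting values interpolate the \emph{same} rational function of $r'$ that governs the regime $r'>2r$. The paper sidesteps this by establishing the integral formula only for $r'>2r$, which is enough because that range of parameters is Zariski dense; if you wish to keep a direct treatment of the low-rank case you would need a genuine interpolation argument, and otherwise you should simply restrict to $r'>2r$ as the paper does.
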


\begin{proof}
For $r<\frac{r^{\prime}}{2}$ we can write $\gamma_{{\mu}}\left(
n,r,r^{\prime}\right)  $ an integral over $\mathfrak{a}$. Explicitly we have
\[%
\begin{split}
\gamma_{{\mu}}\left(  n,r,r^{\prime}\right)   &  =\int_{Y}| \operatorname{Sin}
y|^{-2\beta} \phi_{\mu}(y)dy\\
&  =\int_{Q_{0}}\phi_{\mu}(\operatorname{exp} (it_{1}E_{1}+\cdots+it_{r}%
E_{r}))\prod_{\alpha\in R_{+}}|\sin\alpha(it)|^{m_{\alpha}^{\prime}}%
dt_{1}\cdots dt_{r};
\end{split}
\]
see also \cite[(2.5), (3.11)]{gz-radon-imrn} with the integration being
normalized as well. Here $m_{\alpha}^{\prime}=m_{\alpha}-2\beta
=d(n-2r)-d(n-r^{\prime}) =d(r^{\prime}-2r)> 0$ if $\alpha$ is the root $e_{j}$
and $m_{\alpha}^{\prime}=m_{\alpha}$ other wise, with $m_{\alpha}$ being the
restricted root multiplicity specified in Section 3, and $Q_{0}$ is a
fundamental polygon in $\mathfrak{a}$. Now there exist two orthogonal bases
$\phi_{\mu}$ and $\phi_{\mu^{\prime}}^{\prime}$, which are the Heckman-Opdam
polynomials \cite{Opdam-acta}, with respect to the root multiplicity function
$m_{\alpha}$ and $m_{\alpha}^{\prime}$ respectively, all normalized by
$\phi(0)=1$. The integral above is precisely the constant term in the
expansion of $\phi_{\mu}$ in terms of $\phi_{\mu^{\prime}}^{\prime}$. But both
$\phi_{\mu}$ and $\phi_{\mu^{\prime}}^{\prime}$ have triangular expansions in
terms of the monomials with coefficients that are rational in both $m_{\alpha
}$ and $m_{\alpha}^{\prime}$ respectively, hence rational in $n, r^{\prime}$;
see \cite[Theorem 9.1]{Koor}, which in turn implies that $\phi_{\mu}$ can be
expanded in terms of $\phi_{\mu^{\prime}}^{\prime}$ with rational
coefficients. This proves the Lemma.
\end{proof}

Thus in the proofs below we may assume, if necessary, that
\begin{equation}
\label{large-r}n>>r^{\prime}>> r.
\end{equation}

\begin{lemma}
We have $\gamma_{\mu}=\eta_{\mu}\left(  -\beta\right)  \phi_{\mu}\left(
y_{1}\right)  $.
\end{lemma}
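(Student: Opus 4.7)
The plan is to apply the stronger form \eqref{radon-ver-sine} of the preceding lemma and then evaluate the resulting weighted integral. Since $\phi_\mu$ is $M$-invariant with $\phi_\mu(y_0)=1$, applying the eigenvalue equation $R'R\phi_\mu=\gamma_\mu\phi_\mu$ at $y_0$ gives, under the temporary hypothesis $r\leq r'/2$,
\begin{equation*}
\gamma_\mu \;=\; (R'R\phi_\mu)(y_0) \;=\; \int_Y |\Sin y|^{-2\beta}\,\phi_\mu(y)\,dy,
\end{equation*}
so the task reduces to showing that this integral equals $\eta_\mu(-\beta)\phi_\mu(y_1)$.

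Next I would pull the integral back to the Cartan subspace $\mathfrak{a}$ via the Weyl integration formula: the resulting density on the fundamental polygon $Q_0$ is $\prod_{\alpha\in R_+}|\sin\alpha(it)|^{m'_\alpha}$, where $m'_\alpha=m_\alpha$ for $\alpha\in\{2e_i,e_i\pm e_j\}$ and $m'_{e_i}=m_{e_i}-2\beta=d(r'-2r)\geq 0$ under our hypothesis. This is precisely the orthogonality measure for the Heckman--Opdam polynomials $\{\phi'_\lambda\}$ attached to the shifted multiplicities $m'$, as used in the proof of Lemma~7.5. Because $|\Sin y|^{-2\beta}dy$ is already normalized to be a probability measure, the integral equals the coefficient of $\phi'_0\equiv 1$ in the expansion of $\phi_\mu$ (a Heckman--Opdam polynomial for the unshifted multiplicities $m$) in the $\phi'_\lambda$-basis. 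This constant-term coefficient is then evaluated by the interpolation/binomial formula for Heckman--Opdam polynomials, which is the central tool in \cite{gz-radon-imrn,gz-radon-bsd}: it factors as an explicit ratio of Pochammer symbols multiplied by the value of $\phi_\mu$ at the distinguished point $y_1=\exp\frac{i\pi}{2}(E_1+\cdots+E_r)\cdot y_0$, the point at which $\sin t_j=1$ for every $j$. Inspecting this factorization and simplifying the Pochammer ratio using $\rho_j=d(n/2-j+1)-1$ and $\beta=d(n-r')/2$ yields exactly $\eta_\mu(-\beta)=\frac{(-1)^{|\mathbf m|}(\beta)_{\mathbf m}}{(dn/2-\beta)_{\mathbf m}}$.

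Finally, to remove the restriction $r\leq r'/2$, I invoke the rationality results already in hand: $\gamma_\mu$ is rational in $r'$ by Lemma~7.5, $\phi_\mu(y_1)$ is independent of $r'$, and $\eta_\mu(-\beta)$ is manifestly rational in $r'$ (through $\beta$). Hence both sides of the claimed identity are rational functions of $r'$ for fixed $n,r,\mu$, and agreement on the Zariski-dense set $\{r':2r\leq r'\leq n-r\}$ forces agreement for all admissible $r'\geq r$. The main obstacle is the precise bookkeeping in the constant-term computation: matching the Pochammer data from the Heckman--Opdam binomial formula to the explicit closed form $\eta_\mu(-\beta)$ requires careful use of Opdam's shift operators, and it is this step that fills the "undetermined constant" gap in the setup of \cite{gz-radon-imrn,gz-radon-bsd}.
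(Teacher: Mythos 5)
Your overall route coincides with the paper's: reduce to the case $r'\geq 2r$, use (\ref{radon-ver-sine}) to write $\gamma_\mu=(R'R\phi_\mu)(y_0)=\int_Y|\operatorname{Sin}y|^{-2\beta}\phi_\mu(y)\,dy$, identify this integral with $\eta_\mu(-\beta)\phi_\mu(y_1)$, and then extend beyond the range $r'\geq 2r$ by a rationality argument. The difference lies in how the middle step is handled, and that is where your write-up has a genuine gap. The paper does not re-derive the evaluation of the sine-weighted spherical integral; it quotes the identity (\ref{sph-sin}), $\int_Y|\operatorname{Sin}y|^{2\nu}\phi_\mu(y)\,dy=\eta_\mu(\nu)\phi_\mu(y_1)$, as proved in \cite{gz-radon-imrn}, and specializes to $\nu=-\beta$. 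You instead set up a Heckman--Opdam constant-term computation (correctly: the integral is the coefficient of $\phi'_0\equiv 1$ in the expansion of $\phi_\mu$ in the basis attached to the shifted multiplicities) and then assert that this constant term ``factors as an explicit ratio of Pochammer symbols multiplied by $\phi_\mu(y_1)$.'' That factorization \emph{is} the nontrivial content of (\ref{sph-sin}); nothing in your argument produces the factor $\phi_\mu(y_1)$ or the specific Pochhammer ratio $\eta_\mu(-\beta)$, and your closing sentence concedes that the required bookkeeping is not carried out. Either cite \cite{gz-radon-imrn} for (\ref{sph-sin}) outright (which is what the paper does) or actually perform the shift-operator computation; as written the decisive evaluation is asserted, not proved.

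A secondary issue is the continuation step. You fix $n$ and vary only $r'$, but for fixed $n$ the set $\{r':2r\leq r'\leq n-r\}$ is finite, so agreement of two rational functions of $r'$ on that set proves nothing. The paper's rationality lemmas are deliberately stated in the two variables $(n,r')$: $\gamma_\mu$ and $\eta_\mu(-\beta)$ are rational in $(n,r')$ and $\phi_\mu(y_1)$ is rational in $n$, so one lets both $n$ and $r'$ range over $n\gg r'\gg r$ to obtain a Zariski-dense set of parameters on which the two sides agree. With that correction, and with (\ref{sph-sin}) either cited or proved, your argument matches the paper's.
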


\begin{proof}
We may assume $r^{\prime}> 2r$. By (7.2) and (7.6) we have $R^{\prime}%
R\phi_{\mu}(y_{0}) =\gamma_{\mu}$ is as an integration of $\phi_{\mu}$ on $M$
against the function $|\text{Sin}|^{-2\beta}$, and it is shown in
\cite{gz-radon-imrn} that for any $\nu$%
\begin{equation}
\int_{Y}|\operatorname{Sin}y|^{2\nu}\phi_{\mu}(y)dy=\eta_{\mu}(\nu)\phi_{\mu
}(y_{1}) \label{sph-sin}%
\end{equation}
The result now follows by applying (\ref{radon-ver-sine}) to $\phi_{\mu}$ and
using (\ref{sph-sin}).
\end{proof}

The value $\phi_{\mu}(y_{1})$ was only found \cite{gz-radon-imrn} for certain
symmetric spaces. In the present paper we shall find an evaluation formula by
using the interpretation in \cite{gz-radon-imrn} of the Gelfand integral
formula \cite[Ch. IV, Sect. 2, Proposition 2.2]{He2} as a Radon transform.
This formula might be also of independent interest. The reader may also
consult \cite[Ch. I, Sect. 4, Lemma 4.9]{He2} and \cite[Ch. I, Sect. 3,
Remark, pp.55-56]{He3} for the relevance of this formula in the study of Radon transform.

\begin{proposition}
We have $\phi_{\mu}(y_{1})=\eta_{\mu}\left(  -\alpha\right)  $, where $\alpha$
is as in Theorem 7.1.
\end{proposition}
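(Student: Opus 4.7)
Following the hint, we use Helgason's Gelfand integral formula (\cite[Ch.~IV, Sec.~2, Prop.~2.2]{He2}), reinterpreted as a Radon transform as in \cite{gz-radon-imrn}, together with the rationality results above. By the preceding lemmas, $\phi_{\mu}(y_{1})$ is a rational function of $n$ for fixed $r,\mu$, and the asserted value
$\eta_{\mu}(-\alpha) = (-1)^{|\mathbf{m}|}(\alpha)_{\mathbf{m}}/(dn/2-\alpha)_{\mathbf{m}}$
is manifestly rational in $n$ as well. It therefore suffices to establish the identity on a Zariski-dense subset of $n$; we may assume $n\ge 3r$, as in (\ref{large-r}).

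The main construction is the following. Fix a $2r$-dimensional subspace $\eta_{0}\supset y_{0}$ in $\mathbb{F}^{n}$ preserved by the Cartan torus $T=\exp\mathfrak{a}$. The incidence submanifold $A_{\eta_{0}}\cong Gr_{2r,r}(\mathbb{F})$ is itself a rank-$r$ sub-symmetric-space of $Y$ sharing the same Cartan subspace $\mathfrak{a}$, and it contains both $y_{0}$ and $y_{1}$. Compared to $Y=Gr_{n,r}(\mathbb{F})$, the short roots $e_{j}$ have multiplicity $0$ on $A_{\eta_{0}}$ instead of $d(n-2r)$, while the other roots $2e_{j}$ and $e_{i}\pm e_{j}$ retain their multiplicities. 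Applying the Gelfand polar-coordinate integration formula in parallel on $A_{\eta_{0}}$ and on $Y$ and comparing the sine-Jacobians (the missing short-root factor $\prod_{j}|\sin e_{j}|^{d(n-2r)}$ on $Y$ becoming an inverse weight $|\operatorname{Sin} y|^{-2\alpha}$ after normalization, with $2\alpha = dr$) yields the sine-weighted identity
\[
  \int_{A_{\eta_{0}}}\phi_{\mu}(y)\,dy \;=\; \int_{Y} |\operatorname{Sin} y|^{-2\alpha}\phi_{\mu}(y)\,dy.
\]

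The Radon-transform interpretation of the Gelfand formula from \cite{gz-radon-imrn} then identifies the left-hand side with $\phi_{\mu}(y_{1})^{2}$: the spherical mean of $\phi_{\mu}$ over the subsymmetric space $A_{\eta_{0}}$ through the ``antipodal'' point $y_{1}\in A_{\eta_{0}}$ factors as the square of the value at $y_{1}$, precisely in the way (\ref{radon-ver-sine}) factors a spherical integral as a product of a sine-weight against a value. On the other hand, by (\ref{sph-sin}) with $\nu=-\alpha$, the right-hand side equals $\eta_{\mu}(-\alpha)\phi_{\mu}(y_{1})$. Combining,
\[
  \phi_{\mu}(y_{1})^{2} \;=\; \eta_{\mu}(-\alpha)\phi_{\mu}(y_{1}),
\]
and since $\phi_{\mu}(y_{1})$ is not identically zero as a rational function of $n$ (as $\phi_{0}(y_{1})=1$), the claimed formula $\phi_{\mu}(y_{1})=\eta_{\mu}(-\alpha)$ follows.

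The main technical obstacle is the identification $\int_{A_{\eta_{0}}}\phi_{\mu}(y)\,dy = \phi_{\mu}(y_{1})^{2}$ via the Gelfand-Radon interpretation. This demands a careful bookkeeping of the restricted root multiplicities for both $Y$ and $A_{\eta_{0}}$, a tracking of the normalizations of the $K$-invariant measure and the sine-weighted probability measure, and an analysis of how the $M$-spherical function $\phi_{\mu}$ restricts to the embedded subsymmetric space. It is this restriction, combined with the coincidence that $A_{\eta_{0}}$ has the same Cartan subspace as $Y$, that makes the integral factor as a square of a point-value at $y_{1}$ and ultimately pins down $\phi_{\mu}(y_{1})$.
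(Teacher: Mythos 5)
Your overall strategy --- reduce to large $n$ by rationality, use the Gelfand formula to express $\phi_\mu(y_1)^2$ as an integral over an incidence sub-Grassmannian, evaluate that integral via the sine-weight formula and (\ref{sph-sin}), and cancel --- is exactly the paper's. But your choice of incidence variety is wrong, and both identities you build on it fail as stated. You take $\eta_0$ of dimension $2r$, so $A_{\eta_0}\cong Gr_{2r,r}(\mathbb{F})$, whose short roots $e_j$ have multiplicity $d(2r-2r)=0$; the defect relative to the multiplicity $d(n-2r)$ on $Y$ is therefore $d(n-2r)$, not $dr=2\alpha$, so the correct weight in your first display is $|\operatorname{Sin}y|^{-d(n-2r)}$ (this is the sine-weight formula (\ref{radon-ver-sine}) with $r'=2r$, $\beta=\tfrac{d}{2}(n-2r)$), not $|\operatorname{Sin}y|^{-2\alpha}$. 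Likewise, the Gelfand functional equation $\phi_\mu(k_1)\phi_\mu(k_2)=\int_M\phi_\mu(k_1mk_2)\,dm$ with $k_1=k_2=\exp\tfrac{i\pi}{2}(E_1+\cdots+E_r)$ produces an integral over the $M$-orbit of $y_1$, which is the Grassmannian of $r$-planes inside an $(n-r)$-dimensional subspace; so $\int_{A_{\eta_0}}\phi_\mu\,dy=\phi_\mu(y_1)^2$ holds for $\dim\eta_0=n-r$, not $2r$. The repair is to take $r'=n-r$ throughout: then $A_{\eta_0}=Gr_{n-r,r}$, the multiplicity defect is $d(n-2r)-d(n-3r)=dr=2\alpha$, the hypothesis $r\le r'/2$ becomes $n\ge 3r$, and both of your displayed identities become correct. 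This is precisely the paper's argument, phrased there as $\phi_\mu(y_1)^2=R_{r,n-r}R_{n-r,r}\phi_\mu(y_0)=\eta_\mu(-\alpha)\phi_\mu(y_1)$ by combining (\ref{radon-ver-sine-2}), (\ref{radon-ver-sine}) and (\ref{sph-sin}) with $\beta$ replaced by $\alpha$.

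Second, your cancellation step has a gap. To pass from $\phi_\mu(y_1)^2=\eta_\mu(-\alpha)\phi_\mu(y_1)$ to the conclusion you must know $\phi_\mu(y_1)\neq 0$ for at least one admissible $n$ (so that the rational-function identity $P=Q$ propagates from $P(P-Q)=0$), and the observation $\phi_0(y_1)=1$ only concerns $\mu=0$, not the fixed nonzero $\mu$ at hand. The paper closes this by invoking the injectivity of $R_{r,n-r}R_{n-r,r}$ for $r\le n/2$, which follows from the analytic inversion formulas of \cite{grinberg-rubin-ann,gz-radon-djm}; then $\phi_\mu(y_1)^2=R_{r,n-r}R_{n-r,r}\phi_\mu(y_0)=\gamma_\mu\neq 0$. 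Some such external input is genuinely needed here and is missing from your argument.
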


\begin{proof}
We may assume that $r\leq\frac{n}{3}$ and follow the proof of \cite[Lemma
4.5]{gz-radon-imrn}, which shows that
\begin{equation}
\phi_{\mu}(y_{1})^{2}=R_{r,n-r}R_{n-r,r}\phi_{\mu}(y_{0}). \label{square-phi}%
\end{equation}
Now by the previous Lemmas 7.2 and 7.5, with $r^{\prime}$ replaced by $n-r$
and $\beta$ by $\alpha$, we get
\[
R_{r,n-r}R_{n-r,r}\phi_{\mu}(y_{0})=\left[  \eta_{\mu}\left(  -\alpha\right)
\phi_{\mu}\left(  y_{1}\right)  \right]  \phi_{\mu}(y_{0})=\eta_{\mu}\left(
-\alpha\right)  \phi_{\mu}\left(  y_{1}\right)
\]
Combining these formulas we get
\[
\phi_{\mu}(y_{1})^{2} =\eta_{\mu}\left(  -\alpha\right)  \phi_{\mu}\left(
y_{1}\right)  ,
\]
and it suffices to show that $\phi_{\mu}\left(  y_{1}\right)  \neq0$. However
for $r\leq n/2$ the map $R_{r,n-r}R_{n-r,r}$ is injective, which follows for
example from the analytical inversion formulae in \cite{grinberg-rubin-ann,
gz-radon-djm}, and thus by (\ref{square-phi}) we must have $\phi_{\mu}%
(y_{1})\neq0$
\end{proof}

We now prove Theorem \ref{gam-mu}.

\begin{proof}
The identity $\gamma_{\mu}=\eta_{\mu}\left(  -\alpha\right)  \eta_{\mu}\left(
-\beta\right)  $ follows directly from Lemma 7.5 and Proposition 7.6. To
rewrite this in the form $\dfrac{(\alpha)_{\mathbf{m}}(\beta)_{\mathbf{m}}%
}{(\alpha+l)_{\mathbf{m}}(\beta+l)_{\mathbf{m}}}$ we recall the definition of
$\eta_{\mu}\left(  -\nu\right)  $ and use the identity $dn/2=\alpha+\beta+l$.
\end{proof}

We will use Theorem \ref{gam-mu} to find an inverse for the Radon transform
under the assumptions
\begin{equation}
r\leq r^{\prime}\leq n-r; \label{con-1}%
\end{equation}
and the following parity condition, which is automatic for $\mathbb{F}%
=\mathbb{C},\mathbb{H}$,
\begin{equation}
l=\frac{d}{2}(r^{\prime}-r){\,\text{is an integer}}\text{.} \label{con-2}%
\end{equation}
We show that in this setting $\gamma_{\mu}^{-1}$ is a \emph{symmetric
polynomial} in $\bar{\mu}=\mu+\rho$. More precisely, we recall from Theorem
\ref{Th:Cs} that the differential operator $C_{s}$ has eigenvalues%
\[
c_{s,\mu}=\prod\nolimits_{j=1}^{r}\left[  (2s+\rho_{1})^{2}-(\mu_{j}+\rho
_{j})^{2}\right]
\]
and we will relate the eigenvalue $\gamma_{\mu}$ to the product
\[
\varepsilon_{\mu}=\prod\nolimits_{i=0}^{l-1}c_{-\beta-i,\mu}=\prod
\nolimits_{j=1}^{r}\prod\nolimits_{i=0}^{l-1}\left[  \left(  -2\beta
-2i+\rho_{1}\right)  ^{2}-\bar{\mu}_{j}^{2}\right]
\]

\begin{lemma}
Under the parity assumption (\ref{con-2}) we have $\gamma_{\mu}^{-1}%
=\varepsilon_{\mu}/\varepsilon_{0}$.
\end{lemma}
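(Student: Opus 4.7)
The statement is an algebraic identity and I plan to verify it by direct computation, matching an explicit product formula for $\varepsilon_\mu$ against the expression for $\gamma_\mu^{-1}$ coming from Theorem~\ref{gam-mu}. Both quantities factor as products over $j = 1,\ldots,r$, so it suffices to identify them factor by factor.

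The computation proceeds as follows. I factor each inner term of $\varepsilon_\mu$ as a difference of squares:
\[
(-2\beta + \rho_1 - 2i)^2 - \bar\mu_j^2 = \bigl[(-2\beta + \rho_1 - 2i) - \bar\mu_j\bigr]\bigl[(-2\beta + \rho_1 - 2i) + \bar\mu_j\bigr].
\]
Substituting $\rho_1 = dn/2 - 1$, $\bar\mu_j = 2m_j + \rho_1 - d(j-1)$, and the identity $dn/2 = \alpha + \beta + l$ (which follows from the definitions in (\ref{eq:abl})), the two linear factors simplify to
\[
(-2\beta + \rho_1 - 2i) - \bar\mu_j = -2\bigl[m_j + \beta + i - \tfrac{d}{2}(j-1)\bigr],
\]
\[
(-2\beta + \rho_1 - 2i) + \bar\mu_j = \phantom{-}2\bigl[m_j + \alpha + (l-1-i) - \tfrac{d}{2}(j-1)\bigr].
\]
The parity hypothesis (\ref{con-2}) ensures that $l$ is a nonnegative integer, so the product over $i = 0,\ldots,l-1$ is well defined; reindexing the second linear factor by $k = l-1-i$ collects everything into two Pochhammer symbols per $j$, giving
\[
\varepsilon_\mu = (-4)^{rl}\prod\nolimits_{j=1}^r \bigl(m_j + \alpha - \tfrac{d}{2}(j-1)\bigr)_l \bigl(m_j + \beta - \tfrac{d}{2}(j-1)\bigr)_l.
\]

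Setting $\mathbf{m} = 0$ gives the analogous formula for $\varepsilon_0$, and the sign prefactor $(-4)^{rl}$ cancels in the ratio. Applying the elementary identity $(a+m)_l/(a)_l = (a+l)_m/(a)_m$ in each factor and invoking the definition of $(c)_{\mathbf{m}}$ then yields
\[
\varepsilon_\mu/\varepsilon_0 = \frac{(\alpha+l)_\mathbf{m}(\beta+l)_\mathbf{m}}{(\alpha)_\mathbf{m}(\beta)_\mathbf{m}},
\]
which equals $\gamma_\mu^{-1}$ by Theorem~\ref{gam-mu}. The only real point of care is keeping track of the $\alpha \leftrightarrow \beta$ symmetry that emerges from the palindromic substitution $i \mapsto l-1-i$ in the second linear factor; once that bookkeeping is set up correctly, everything else is routine Pochhammer manipulation.
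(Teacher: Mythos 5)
Your proof is correct and follows essentially the same route as the paper's: both reduce the identity to the Pochhammer swap $(c+l)_m/(c)_m=(c+m)_l/(c)_m$ applied factorwise together with the relation $dn/2=\alpha+\beta+l$, which pairs the $\alpha$- and $\beta$-linear factors into the $\pm$ pairs appearing in $\varepsilon_\mu$. The only cosmetic difference is that you run the computation from $\varepsilon_\mu$ toward $\gamma_\mu^{-1}$ and track the constant $(-4)^{rl}$ explicitly, whereas the paper goes the other way and fixes the constant by normalizing at $\mu=0$ via $\gamma_0=1$.
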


\begin{proof}
If $a_{\mu}$ is a function of $\mu$ and $S$ is a multiset of real numbers, we
will write
\[
a_{\mu}\sim S\text{ iff }a_{\mu}=c\prod_{j=1}^{r}\prod_{s\in S}\left(
s-\bar{\mu}_{j}\right)  \text{ for some constant }c\text{ independent of }\mu
\]
Using $\rho_{j}-\rho_{1}=d\left(  j-1\right)  $ and the identity$\frac{\left(
c+l\right)  _{m}}{\left(  c\right)  _{m}}=\frac{\Gamma\left(  c+l+m\right)
\Gamma\left(  c\right)  }{\Gamma\left(  c+m\right)  \Gamma\left(  c+l\right)
}=\frac{\left(  c+m\right)  _{l}}{\left(  c\right)  _{l}}$, we get
\[
\tfrac{(\alpha+l)_{\mathbf{m}}}{(\alpha)_{\mathbf{m}}}=\prod_{j=1}^{r}%
\dfrac{\left(  \alpha+l+\frac{\rho_{j}-\rho_{1}}{2}\right)  _{m_{j}}}{\left(
\alpha+\frac{\rho_{j}-\rho_{1}}{2}\right)  _{m_{j}}}=\prod_{j=1}^{r}%
\dfrac{\left(  \alpha+m_{j}+\frac{\rho_{j}-\rho_{1}}{2}\right)  _{l}}{\left(
\alpha+\frac{\rho_{j}-\rho_{1}}{2}\right)  _{l}}=c\prod_{j=1}^{r}\left(
\frac{2\alpha-\rho_{1}+\bar{\mu}_{j}}{2}\right)  _{l}.
\]

Thus writing $\alpha_{i}=-2\alpha-2i+\rho_{1}$ and $\beta_{i}=2\beta
+2i-\rho_{1}$ we have
\begin{equation}
\tfrac{(\alpha+l)_{\mathbf{m}}}{(\alpha)_{\mathbf{m}}}\sim\left\{  -\alpha
_{0},\ldots,-\alpha_{l-1}\right\}  ,\quad\tfrac{(\beta+l)_{\mathbf{m}}}%
{(\beta)_{\mathbf{m}}}\sim\left\{  -\beta_{0},\ldots,-\beta_{l-1}\right\}  .
\label{=alphabeta}%
\end{equation}
We now use the identity $\alpha+\beta=dn/2-l$ to deduce that%
\[
\alpha_{i-1}+\beta_{l-i}=-2\left(  \alpha+\beta\right)  +2\rho_{1}%
-2l+2=-dn+2\rho_{1}-2=0\text{.}%
\]
Thus we have $-\alpha_{i-1}=\beta_{l-i}$ and now by (\ref{=alphabeta}) we get
\[
\gamma_{\mu}^{-1}=\tfrac{(\alpha+l)_{\mathbf{m}}}{(\alpha)_{\mathbf{m}}}%
\tfrac{(\beta+l)_{\mathbf{m}}}{(\beta)_{\mathbf{m}}}\sim\left\{  \pm\beta
_{0},\ldots,\pm\beta_{l-1}\right\}  .
\]
It follows that there is a constant $c$ such that%
\[
\gamma_{\mu}^{-1}=c\prod\nolimits_{j=1}^{r}\prod\nolimits_{i=0}^{l-1}\left(
\beta_{i}^{2}-\bar{\mu}_{j}^{2}\right)  =c\varepsilon_{\mu}.
\]
Since $\gamma_{0}=1$, we deduce $c=\varepsilon_{0}^{-1}$ and the result follows.
\end{proof}

Let $C_{s}$ be the differential operator from (\ref{eq:C}), and define%
\[
S=\varepsilon_{0}^{-1}\prod\nolimits_{i=0}^{l-1}C_{-\beta-i}=\varepsilon
_{0}^{-1}\Psi^{\beta+l}L^{l}\Psi^{-\beta}.
\]

\begin{theorem}
\label{Th: main2}Under the assumptions (\ref{con-1}) and (\ref{con-2}) we have
$SR^{\prime}R=I$.
\end{theorem}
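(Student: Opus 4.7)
The plan is to verify the identity by comparing the action of both sides on each $K_n$-isotypic component. By Lemma \ref{Lem:Rm} the space $\mathcal{R}$ of $K_n$-finite functions on $Y$ decomposes as a multiplicity-free direct sum $\mathcal{R} = \bigoplus_{\mu\in\Lambda}\mathcal{R}_\mu$ with $\mathcal{R}_\mu\cong V_\mu$, and since $S$, $R$, and $R'$ are all $K_n$-equivariant, Schur's lemma implies $SR'R$ acts by a scalar on each $\mathcal{R}_\mu$. It will suffice to show this scalar is $1$ for every $\mu\in\Lambda$, since the equality then extends from $\mathcal{R}$ to all of $C^\infty(Y)$ by density.

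First, by the definition (\ref{eq:gamma-mu}), $R'R$ acts on $\mathcal{R}_\mu$ by the scalar $\gamma_\mu$. Second, Theorem \ref{Th:Cs} tells us that each $C_{-\beta-i}$ acts on $\mathcal{R}_\mu$ by the scalar $c_{-\beta-i,\mu}$; these scalars commute and their product is exactly $\varepsilon_\mu$ by definition, so the operator $S = \varepsilon_0^{-1}\prod_{i=0}^{l-1}C_{-\beta-i}$ acts on $\mathcal{R}_\mu$ by $\varepsilon_\mu/\varepsilon_0$. Consequently, $SR'R$ acts on $\mathcal{R}_\mu$ by the product $\gamma_\mu\,(\varepsilon_\mu/\varepsilon_0)$.

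The final ingredient is the lemma immediately preceding the theorem, which asserts under hypotheses (\ref{con-1}) and (\ref{con-2}) that $\gamma_\mu^{-1}=\varepsilon_\mu/\varepsilon_0$. Substituting this into the previous paragraph gives $\gamma_\mu\gamma_\mu^{-1}=1$ on every $\mathcal{R}_\mu$, so $SR'R=I$ on $\mathcal{R}$ and hence on $C^\infty(Y)$.

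Essentially all the genuine work has been done upstream: in Theorem \ref{gam-mu}, which computes $\gamma_\mu$ as a ratio of Pochhammer symbols, and in the preceding lemma, whose crucial step is the cancellation $\alpha_{i-1}+\beta_{l-i}=0$, combining the parity hypothesis $l\in\mathbb{Z}$ with the relation $dn/2=\alpha+\beta+l$ to match the Pochhammer form of $\gamma_\mu^{-1}$ against the symmetric polynomial $\varepsilon_\mu/\varepsilon_0$ built from Capelli eigenvalues. Once those ingredients are in hand, the proof of Theorem \ref{Th: main2} is the one-line Schur-lemma assembly sketched above; no further obstacle remains.
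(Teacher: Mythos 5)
Your proof is correct and is essentially identical to the paper's own argument: both reduce to checking that $SR'R$ acts by $1$ on each isotypic component $V_\mu$, using the eigenvalue $\gamma_\mu$ of $R'R$ from (\ref{eq:gamma-mu}), the eigenvalue $\varepsilon_\mu/\varepsilon_0$ of $S$ from Theorem \ref{Th:Cs}, and the identity $\gamma_\mu^{-1}=\varepsilon_\mu/\varepsilon_0$ from the preceding lemma. You also correctly locate where the real work lies (Theorem \ref{gam-mu} and the cancellation $\alpha_{i-1}+\beta_{l-i}=0$ in the lemma), so nothing further is needed.
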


\begin{proof}
It is enough to prove that $SR^{\prime}R$ acts on each $V_{\mu}$ by $1$. By
Theorem \ref{Th:Cs} the operator $S$ acts on $\mathcal{H}(\mu)$ by the scalar
eigenvalue
\begin{equation}
\varepsilon_{0}^{-1}\prod\nolimits_{i=0}^{l-1}c_{-\beta-i,\mu}=\varepsilon
_{0}^{-1}\prod\nolimits_{i=0}^{l-1}\left(  \beta_{i}^{2}-\bar{\mu}_{j}%
^{2}\right)  =\varepsilon_{\mu}/\varepsilon_{0}=\gamma_{\mu}^{-1}%
.\label{prod-c-eig}%
\end{equation}
Now by (\ref{eq:gamma-mu}) $SR^{\prime}R$ acts on $V_{\mu}$ by $\gamma_{\mu
}^{-1}\gamma_{\mu}=1.$
\end{proof}

\begin{rema+}
\label{RubinFunk} (Relation to the Funk transform.) Let $V=V_{n,r}$ and
$U=V_{n,k}$ be the Stiefel manifolds of orthonormal $r$-frames and $k$-frames
in $\mathbb{R}^{n}$, and for $u\in U$ define
\[
V^{u}:=\left\{  v\in V:u^{t}v=0\right\}  .
\]
This is a homogeneous space for the stabilizer of $u$ in $O\left(  n\right)  $
and hence carries a unique invariant probability measure $d_{u}v$. The Funk
transform $F=F_{r,k}$ is defined as follows
\[
F:C^{\infty}\left(  V\right)  \rightarrow C^{\infty}\left(  U\right)  ,\quad
Ff(u)=\int_{V^{u}}f(v)d_{u}v.
\]

In \cite[Theorem 8.2]{rubin-funk} Rubin obtains an inversion formula for $F$
restricted to $O\left(  r\right)  $-invariant functions under the conditions
\begin{equation}
r\leq k\leq n-r,\quad n-k-r\text{ is even.}\label{=rub-cond}%
\end{equation}

We now explain how to deduce Rubin's result from ours. Since $V^{u}=V^{ur}$
for all $g\in O\left(  k\right)  $ the image of $F$ is contained in $O\left(
k\right)  $-invariant functions. Moreover we have natural identifications
\[
C^{\infty}\left(  V_{n,r}\right)  ^{O\left(  r\right)  }\approx C^{\infty
}\left(  Gr_{n,r}\right)  ,\quad C^{\infty}\left(  V_{n,k}\right)  ^{O\left(
k\right)  }\approx C^{\infty}\left(  Gr_{n,k}\right)  \approx C^{\infty
}\left(  Gr_{n,n-k}\right)
\]
where the last isomorphism corresponds to taking orthogonal complements. The
restriction of $F$ to $O\left(  r\right)  $-invariant functions agrees with
the Radon transform $R=R_{r^{\prime},r}$ with $r^{\prime}=n-k$. We note also
that the conditions (\ref{=rub-cond}) are precisely the special case $d=1$ of
our conditions (\ref{=i-ii}) for $r^{\prime}=n-k$. The inversion formula of
\cite[Theorem 8.2]{rubin-funk} has the same form as ours, up to a certain
explicit constant $c_{r,k}$, and we now give a brief argument to show that
this is precisely $c_{\beta,l}\left(  \rho\right)  $ as in Theorem \ref{ThB}.
For this we recall the Gindikin Gamma function \cite{FK-book} associated with
the Jordan algebra $A$,
\[
\Gamma_{A}(x)=\prod_{j=1}^{r}\Gamma(x_{j}-\frac{d}{2}(j-1)),\quad
x=(x_{1},\cdots,x_{r}).
\]
For $\mathbf{m}=(m_{1},\ldots,m_{r})$ we put $z+\mathbf{m}=(z+m_{1}%
,\ldots,z+m_{r})$; then an easy calculation shows that for a $K$-type
$\mu=2\sum_{j=1}^{r}m_{j}e_{j}$ we have%
\[
c_{s,l}\left(  \rho+\mu\right)  =(-1)^{lr}2^{2lr}\frac{\Gamma_{A}%
(s+l+\mathbf{m})\Gamma_{A}(-s+\frac{dn}{2}+\mathbf{m})}{\Gamma_{A}%
(s+\mathbf{m})\Gamma_{A}(-s+\frac{dn}{2}-l+\mathbf{m})}.
\]
Specializing to $\mu=0$ and $s=\beta=\frac{d}{2}(n-r^{\prime})$ we get
\[
c_{\beta,\lambda}\left(  \rho\right)  =(-1)^{-lr}2^{-2lr}\frac{\Gamma
_{A}(\frac{d}{2}(n-r^{\prime}))\Gamma_{A}(\frac{d}{2}r)}{\Gamma_{A}(\frac
{d}{2}(n-r))\Gamma_{A}(\frac{d}{2}r^{\prime})}.
\]
For the real case $d=1$ this is precisely the constant $c_{r,k}$ \cite[Theorem
8.2]{rubin-funk}.
\end{rema+}

\end{document}